\tikzset{
	labl/.style={anchor=south, rotate=-90, inner sep=.6mm}
}
\definecolor{darkblue}{rgb}{0,0.1,.5}
\newcommand*{\RN}[1]{\expandafter\@slowromancap\romannumeral #1@}
\newcommand\sbullet[1][.5]{\mathbin{\vcenter{\hbox{\scalebox{#1}{$\bullet$}}}}}
\def\C{\mathbb{C}}
\def\Z{\mathbb{Z}}
\def\Fp{\mathbb F_p}
\def\Ap{\mathfrak A_p}
\def\Ext{\mathop{\mathrm{Ext}}\nolimits}
\def\Cotor{\mathop{\mathrm{Cotor}}\nolimits}
\def\id{\mathds{1}}
\DeclareMathOperator{\im}{Im}
\def\ge{\geqslant}
\newcommand{\hr}[2][]{\hyperref[#2]{#1~\ref{#2}}}
\newtheorem{theorem}{Theorem}[section]
\newtheorem*{theorem*}{Theorem}
\newtheorem{lemma}[theorem]{Lemma}
\newtheorem{proposition}[theorem]{Proposition}
\newtheorem{corollary}[theorem]{Corollary}
\theoremstyle{definition}
\newtheorem{remark}[theorem]{Remark} 
\renewenvironment{proof}[1][{\itshape Proof}]{{\itshape #1. }}{\qed}
\title{On homology of the $MSU$ spectrum}
\author{Semyon Abramyan}
\address{AG Laboratory, HSE, 6 Usacheva str., Moscow, Russia, 119048}
\email{semyon.abramyan@gmail.com}
\thanks{This work is supported by the Russian Science Foundation under grant № 21-71-00049 and partially funded by the Simons Foundation.}
\begin{document}

\begin{abstract}
    We give a complete proof the Novikov isomorphism
    $\varOmega^{SU}\otimes \Z[\textstyle\frac12]\cong\mathbb Z[{\textstyle\frac12}][y_2,y_3,\ldots],\quad \deg y_i=2i$, where $\varOmega^{SU}$ is the $SU$-bordism ring. The proof uses the Adams spectral sequence and a description of the comodule structure of $H_{\sbullet}(MSU; \mathbb F_p)$ over the dual Steenrod algebra $\Ap^*$ with odd prime~$p$, which was also missing in the literature.
\end{abstract}

\subjclass[2020]{55N22, 55S10, 57R77}

\maketitle

\tableofcontents

\section{Introduction}
The theory of bordism and cobordism was actively developed in 
the~1950--1960s. Most leading topologists of the time have 
contributed to this development. The idea of bordism was first explicitly 
formulated by Pontryagin~\cite{pont55} who related the theory of framed bordism 
to the stable homotopy groups of spheres using the concept of transversality.
Key results of bordism theory were obtained in the works of Rokhlin~\cite{rohl59}, Thom~\cite{thom54},
Novikov~\cite{novi60, novi62}, Wall~\cite{wall60}, Averbuch~\cite{aver59}, 
Milnor~\cite{miln60}, Atiyah~\cite{atiy61}.

Topologists have quickly realised the potential of the Adams spectral sequence~\cite{adam58} for calculations in the bordism theory. It culminated in the description of the complex (or unitary) bordism ring $\varOmega^U$ in the works of Milnor~\cite{miln60} and 
Novikov~\cite{novi60,novi62}. The ring $\varOmega^U$ was shown to be isomorphic 
to a graded integral polynomial ring $\Z[a_i\colon i\ge1]$ on infinitely many 
generators, with one generator in every even degree, $\deg a_i=2i$. This result 
has since found numerous applications in algebraic topology and beyond.

In Novikov's 1967 work \cite{novi67} a brand new approach to cobordism and stable
homotopy theory was proposed, based on the application of the Adams--Novikov
spectral sequence and formal group laws techniques. This approach was
further developed in the context of bordism of manifolds with
singularities in the works of Mironov \cite{miro75}, Botvinnik \cite{botv92} and Vershinin \cite{vers93}.
The Adams--Novikov spectral sequence has also become the main
computational tool for stable homotopy groups of spheres \cite{rave86}.

As an illustration of his approach, Novikov outlined a complete
description of the additive torsion and the multiplicative structure of
the SU-bordism ring $\varOmega^{SU}$, which provided a systematic view on
earlier geometric calculations with this ring. A modernised exposition of
this description is given in the survey paper~\cite{c-l-p19} of Chernykh, Limonchenko and
Panov; it includes the geometric results by Wall \cite{wall66},
Conner--Floyd~\cite{co-fl66m} and Stong~\cite{ston68}, the calculations with the
Adams-Novikov spectral sequence, and the details of the arguments missing
in Novikov's work \cite{novi67}. A full description of the SU-bordism ring
$\varOmega^{SU}$ relies substantially on the calculation of
$\varOmega^{SU}$ with 2 invereted, namely on proving the ring isomorphism
\[
	\varOmega^{SU}\otimes_\Z \Z[\textstyle\frac12]\cong\mathbb Z[{\textstyle\frac12}][y_2,y_3,\ldots],\quad \deg y_i=2i.
\]
This result first appeared in Novikov's work \cite{novi62} with only a sketch of
the proof, stating that it can be proved using Adams' spectral sequence in
a way similar to Novikov's calculation of the complex bordism ring
$\varOmega^U$. Although the result has been considered as known since the
1960s, its full proof has been missing in the literature, and also not
included in the survey~\cite{c-l-p19}.

\medskip

The main goal of this work is to give a complete proof of the isomorphism above
using the original methods of the Adams spectral sequence. It is included as \hr[Theorem]{pi-msu}. While filling in details in Novikov's sketch we faced technical problems that seemed to be unknown before. For example, the comodule structure of $H_{\sbullet}(MSU; \mathbb F_p)$ over the dual Steenrod algebra $\Ap^*$ with odd prime $p$ has not been satisfactory described in the literature. This calculation is one of the main results of the paper (\hr[Theorem]{MSU-coalgebra}).
We also included a description of the related Hurewicz homomorphism $\pi_{\sbullet}(MSU)\to H_{\sbullet}(MSU)$ and forgetful map $\pi_{\sbullet}(MSU)\to \pi_{\sbullet}(MU)$ in appropriate generators (with $2$ inverted) in \hr[Corollary]{comm_sq}, and a result on the divisibility of characteristic numbers of
$SU$-manifolds (\hr[Theorem]{polynomial-generators}).

\medskip

The structure of the paper is as follows. In \hr[Section]{preliminaries},
we fix notation and recall the necessary information on bordism, cohomology 
operations and the Adams spectral sequence.
In \hr[Section]{sunitary}, we describe $H_{\sbullet}(MSU; \Fp)$ as a 
$\Ap^*$-comodule. Then in \hr[Section]{novikov_thm} using the Adams spectral sequences we prove the 
isomorphism $\varOmega^{SU} \otimes \Z[\frac{1}{2}]
{}\cong{}
\Z[\frac{1}{2}][y_2, y_3, \ldots]$. Finally, in \hr[Section]{generators} we descibe the Hurewicz homomorphism and compute the Milnor genus $s_n(y_n)$.

\medskip

I wish to express gratitude to my supervisor Taras E. Panov for stating the problem, valuable advice and countless hours of discussions. I thank my advisor Alexander A.~Gaifullin for stimulating discussions and significant contribution to this work.

\section{Preliminaries}\label{preliminaries}
In this section we recall necessary facts and notation. To fully explore the 
topics listed below, we recommend the following
sources~\cite{swit75,ston68,co-fl64,mi-st74,mo-ta68,st-ep62,rave86}.

\medskip

Let $G$ denote either the unitary group $U$ or the special unitary group $SU$.
Denote by $MG$ corresponding Thom spectrum, i.e. the spectrum whose spaces are
Thom spaces of the universal vector $G(n)$-bundles.

By the fundamental theorem of Pontryagin and Thom, the homotopy groups of the Thom
spectrum $MG$ are isomorphic to the bordism ring of manifolds with a $G$-structure on
the stable normal bundle.

\medskip

The main technical tool for computations of $\pi_{\sbullet}(MG)$
will be the Adams spectral sequence. We first recall some basic facts
about Steenrod operations.

Let $X$ be a topological space and $k$ be a non-negative integer.
There are natural cohomology operations called \emph{Steenrod operations}
\[
Sq^k\colon H^n(X; \mathbb F_2) \to H^{n+k}(X; \mathbb F_2),
\]
and for an odd prime~$p$
\[
P^k\colon H^n(X; \Fp) \to H^{n + 2n(p-1)}(X; \Fp).
\]

These operations are defined by the following properties.
\begin{enumerate}[leftmargin=0.06\textwidth]
	\item $Sq^k$ is $\mathbb F_2$-linear, $P^k$ is $\Fp$-linear.
	\item $Sq^0 = \id$ and $P^0 = \id$.
	\item $Sq^1$ is the Bockstein homomorphism.
	\item If $k > \deg x$ then $Sq^k(x)=0$.
	If $2k > \deg z$ then $P^k(z) = 0$.
	\item If $k = \deg x$, the $Sq^k(x) = x^2$.
	If $2k = \deg z$, then $P^k(z) = z^p$.
	\item (Cartan formula)
	$Sq^k(xy) = \sum_{i=0}^k Sq^i(x)Sq^{k-i}(y)$,
	$P^k(zw) = \sum_{i=0}^k P^i(z)P^{k-i}(w)$.
	\item (Adem relations)
	If $0 < l < 2k$, then
	\[
	Sq^lSq^k = \sum_{i=0}^{l/2} \binom{k-i-1}{l-2i}Sq^{k+l -i}Sq^i.
	\]
	If $0 < l < pk$, then
	\begin{align*}
	P^l P^k &= \sum_{i=0}^{l/p} (-1)^{l+i} \binom{(p-1)(k-i)-1}{l-pi}
	P^{k+l-i}P^i,
	\\
	P^l \beta P^k &=\sum_{i=0}^{l/p} (-1)^{l+i} \binom{(p-1)(k-i)}{l-pi}
	\beta P^{k+l-i}P^i
	\\
	&\phantom{=}{}+{}
	\sum_{i=0}^{(l-1)/p} (-1)^{l+i} \binom{(p-1)(k-i)-1}{l-pi}
	P^{k+l-i}\beta P^i,
	\end{align*}
	where $\beta\colon H^n(X; \Fp) \to H^{n+1}(X;\Fp)$ is the mod~$p$ Bockstein homomorphism.
\end{enumerate}

For a prime~$p$, define the mod~$p$ \emph{Steenrod algebra} $\Ap$ to be the
free $\Fp$-algebra generated by $Sq^k$, $k\ge 1$, if $p = 2$,
or by $\beta$ and $P^k$, $k\ge 1$, if $p$ is odd, modulo
the Adem relations. By the Cartan-Serre theorem, the admissible monomials  form an additive basis of $\mathfrak A_2$ (resp. $\Ap$).

The Steenrod algebra is a Hopf algebra, with a coproduct induced by the map of the Eilenberg-MacLane spectra
\[
	H\Fp \simeq \Sigma^{\infty}S^0 \wedge H\Fp \xrightarrow{\eta\wedge \id} H\Fp\wedge H\Fp. 
\]
The following theorems describes the structure of its dual Hopf algebra $\Ap^*$.

\begin{theorem}[Milnor~\cite{miln58}]\label{steenrod-dual}
	For $p = 2$,
	let $\xi_n \in (\mathfrak{A}_2^*)_{2^n-1}$ be the
	dual basis element
	\[
	\xi_n
	=
	(Sq^{2^{n-1}} Sq^{2^{n-2}}\cdots Sq^{2} Sq^1)^*.
	\]
	For odd $p$,
	let $\xi_n \in (\Ap^*)_{2(p^n-1)}$
	and
	$\tau_n \in (\Ap^*)_{2p^n-1}$
	be the dual basis elements
	\begin{align*}
	\xi_n &= (P^{p^{n-1}}P^{p^{n-2}}\cdots P^p P^1)^*,
	\\
	\tau_n &= (P^{p^{n-1}}P^{p^{n-2}}\cdots P^p P^1 \beta)^*.
	\end{align*}
	Then for $p = 2$, we have an isomorphism of algebras
	\[
	\mathfrak{A}_2^*\cong \mathbb{F}_2[\xi_1, \xi_2, \ldots],
	\]
	and for odd~$p$, we have an isomorphism of algebras
	\[
	\Ap^* \cong
	\Fp[\xi_1, \xi_2, \ldots]\otimes_{\Fp}\Lambda_{\Fp}[\tau_0, 
	\tau_1, \ldots].
	\]
	
	Let $\xi_0 = 1$. The coproduct on $\Ap^*$ is given by
	\begin{align*}
	\Delta(\xi_n) &= \sum_{k = 0}^{n} \xi_{n-k}^{p^k}\otimes\xi_k,%
	\quad\text{for all~$p$};
	\\
	\Delta(\tau_n) &= \tau_n\otimes 1 + \sum_{k = 0}^{n} 
	\xi_{n-k}^{p^k}\otimes\tau_k,%
	\quad\text{for odd~$p$}.
	\end{align*}
\end{theorem}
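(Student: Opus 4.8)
The plan is to realise $\Ap^*$ as the graded dual of $\Ap$ and to read off its entire Hopf-algebra structure from the action of $\Ap$ on a single well-understood space, namely $B\Z/p = K(\Z/p,1)$. The product on $\Ap^*$ is dual to the Cartan structure on $\Ap$, and the coproduct on $\Ap^*$ is dual to composition of operations; rather than analyse these dualisations abstractly, I would encode them geometrically. For a space $X$, the total operation assembles the action $\Ap\otimes H^{\sbullet}(X)\to H^{\sbullet}(X)$ into a map
\[
	\lambda_X\colon H^{\sbullet}(X)\to H^{\sbullet}(X)\mathbin{\hat\otimes}\Ap^*,\qquad \theta(x)=\sum_i\langle\theta,a^i\rangle x_i\ \text{ if }\ \lambda_X(x)=\sum_i x_i\otimes a^i,
\]
which is a ring homomorphism by the Cartan formula and a coaction for the comodule structure on $H^{\sbullet}(X)$. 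Taking $\{a^i\}$ to be the basis dual to the admissible monomials $\{\theta\}$, one has $\lambda_X(x)=\sum_\theta\theta(x)\otimes\theta^*$. The strategy is to compute $\lambda_X$ for $X=B\Z/p$, which simultaneously identifies the generators $\xi_n,\tau_n$ and, via coassociativity, produces the coproduct.

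First I would compute the action on $H^{\sbullet}(B\Z/p;\Fp)$. For odd $p$ this is $\Lambda_{\Fp}[u]\otimes\Fp[v]$ with $\deg u=1$, $v=\beta u$, $\deg v=2$; the only nonzero operations are $\beta u=v$, $P^1 v=v^p$ (with $P^k v=0$ for $k\ge 2$), and their iterates, giving $P^{p^{n-1}}\cdots P^1(v)=v^{p^n}$ and $P^{p^{n-1}}\cdots P^1\beta(u)=v^{p^n}$. A short excess argument shows that in each relevant degree $P^{p^{n-1}}\cdots P^1$ (resp.\ $P^{p^{n-1}}\cdots P^1\beta$) is the only admissible monomial not annihilating $v$ (resp.\ $u$). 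Grouping the defining sum $\lambda_{B\Z/p}(v)=\sum_\theta\theta(v)\otimes\theta^*$, and likewise for $u$, according to the value taken then yields
\begin{align*}
	\lambda_{B\Z/p}(v)&=\sum_{n\ge 0}v^{p^n}\otimes\xi_n,
	\\
	\lambda_{B\Z/p}(u)&=u\otimes 1+\sum_{n\ge 0}v^{p^n}\otimes\tau_n,
\end{align*}
with $\xi_n=(P^{p^{n-1}}\cdots P^1)^*$ and $\tau_n=(P^{p^{n-1}}\cdots P^1\beta)^*$ exactly as in the theorem; for $p=2$ the same computation on $H^{\sbullet}(\mathbb{RP}^\infty;\mathbb F_2)=\mathbb F_2[x]$ with $Sq(x)=x+x^2$ gives $\lambda(x)=\sum_n x^{2^n}\otimes\xi_n$.

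For the algebra structure I would prove that the $\xi_n$ (and $\tau_n$) are algebraically independent and generate. Evaluating $\lambda_X$ on finite products $(B\Z/p)^{\times k}$ and using that $\lambda_X$ is a ring map expands $\lambda(v_1\cdots v_k)=\prod_j\big(\sum_n v_j^{p^n}\otimes\xi_n\big)$, whose coefficients are arbitrary monomials in the $\xi_n$; since the cohomology monomials of $H^{\sbullet}((B\Z/p)^{\times k})$ are linearly independent, so are the corresponding monomials $\xi^R\tau^E$ in $\Ap^*$. That they span follows by a dimension count, matching the Poincaré series of $\Fp[\xi_1,\xi_2,\ldots]\otimes_{\Fp}\Lambda_{\Fp}[\tau_0,\tau_1,\ldots]$ against the number of admissible monomials in each degree, which by the Cartan--Serre basis theorem equals $\dim_{\Fp}\Ap^*$. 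Equivalently, one checks that the monomials $\xi^R\tau^E$ pair with the admissible basis in unitriangular fashion, so that they form a basis dual to a reordering of the admissibles.

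Finally, the coproduct formulas fall out of coassociativity $(\lambda\otimes\id)\lambda=(\id\otimes\Delta)\lambda$ together with the ring-map identity $\lambda(v^{p^n})=\lambda(v)^{p^n}$. Applying both sides to $v$ and comparing the coefficient of $v^{p^m}$ gives $\Delta(\xi_m)=\sum_{k=0}^m\xi_{m-k}^{p^k}\otimes\xi_k$, the $p^k$-th powers arising precisely from the Frobenius twist in $\lambda(v)^{p^n}=\big(\sum_k v^{p^k}\otimes\xi_k\big)^{p^n}$; applying them to $u$ reproduces $\Delta(\tau_m)=\tau_m\otimes 1+\sum_{k=0}^m\xi_{m-k}^{p^k}\otimes\tau_k$, where the leading summand $\tau_m\otimes 1$ is forced by the $u\otimes 1$ term in $\lambda(u)$. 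The main obstacle is the algebra-structure step: the computation on $B\Z/p$ hands over the generators and the coproduct almost formally, but ruling out hidden relations and proving completeness of the generating set genuinely requires the combinatorial comparison with the admissible basis, and it is there, together with the careful bookkeeping of the Frobenius twists $\xi_{n-k}^{p^k}$ and the odd-primary signs, that I expect to spend the most effort.
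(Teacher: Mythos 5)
The paper never proves this statement: it is imported verbatim from Milnor's 1958 paper (reference [Mi1]) as background material, so there is no internal proof to compare against. Your proposal is, in substance, a faithful reconstruction of Milnor's original argument --- define $\xi_n$ and $\tau_n$ through the total-operation coaction on $H^{\sbullet}(B\Z/p;\Fp)$ (resp.\ $H^{\sbullet}(\mathbb{R}P^{\infty};\mathbb F_2)$), read off the coproduct from coassociativity together with the Frobenius twist in $\lambda(v)^{p^n}$, and settle the algebra structure by comparison with the admissible (Cartan--Serre) basis --- and the computations you outline (the action on $u,v$, the identification of the surviving admissible monomials, and the extraction of $\Delta(\xi_m)$ and $\Delta(\tau_m)$ by comparing coefficients of $v^{p^m}$) are all correct. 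One caveat on the algebra-structure step: the inference ``since the cohomology monomials of $H^{\sbullet}((B\Z/p)^{\times k})$ are linearly independent, so are the corresponding monomials $\xi^R\tau^E$'' is not valid as literally stated. Writing $\lambda(v_1\cdots v_k)=\sum_R m_R\otimes \xi^R$ with the classes $m_R$ linearly independent places no constraint on the elements $\xi^R$ of $\Ap^*$; the same identity would hold after regrouping even if some of them coincided or were dependent. What the product computation actually buys you is the value of the pairing $\langle \xi^R,\theta\rangle$ for admissible $\theta$, namely the coefficient of a representative monomial $v^{T_R}$ in $\theta(v_1\cdots v_k)$; linear independence and spanning must then come from your ``equivalently'' clause --- the unitriangularity of this pairing against the admissible basis, combined with the degree-wise count matching the number of admissible monomials. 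So that clause is not an interchangeable alternative but the actual proof, exactly as in Milnor; since you do state it and correctly identify it as where the real work lies, the proposal as a whole stands.
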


\begin{theorem}[Brown-Davis-Peterson~{\cite[Theorem~1.2]{b-d-p77}}]%
		\label{xi-antipode}

		Let $\xi = 1 + \xi_1 + \xi_2 + \dots$
		If $R = (r_1, r_2, \ldots)$ is a finite sequence of non-negative
		integers, let $\xi^R = \xi_1^{r_1}\xi_2^{r_2}\dots \in \Ap^*$,
		$e(R) = \sum_{i \ge 1} r_i$, $n(R) = \sum_{i \ge 1} r_i(p^i-1)$,
		and $\binom{e(R)}{r_1, r_2, \ldots}$ the multinomial coefficient.
		Then for each integer $k$,
		\[
		\bar\xi^k = \sum_{R} \binom{c(n(R) + k + 1)}{e(R)}\binom{e(R)}{r_1, r_2, \ldots}
		\xi^R,
		\]
		where $c(m) = p^i - m$, with $i$ the smallest integer
		such that $p^i - m$ is positive.
	\end{theorem}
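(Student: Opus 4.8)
The plan is to treat $\bar\xi = \chi(\xi) = 1 + \bar\xi_1 + \bar\xi_2 + \cdots$ as the conjugate (antipode) of the total class $\xi$ in the completed Hopf algebra $\Ap^*$, and to read off its powers from Milnor's coproduct. Applying the antipode identity $\mu(\mathrm{id}\otimes\chi)\Delta = \eta\epsilon$ to $\xi_n$ and using that the conjugation $\chi$ is an algebra homomorphism on the commutative algebra $\Ap^*$, the formula $\Delta(\xi_n) = \sum_{k=0}^n\xi_{n-k}^{p^k}\otimes\xi_k$ of \hr[Theorem]{steenrod-dual} yields the recursion
\[
\sum_{k=0}^n \xi_{n-k}^{p^k}\,\bar\xi_k = \delta_{n,0},\qquad n\ge0,
\]
which determines the $\bar\xi_n$, hence all powers $\bar\xi^k$. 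Conceptually the same data is encoded by the Frobenius-linear series $\xi(t) = \sum_{n\ge0}\xi_n t^{p^n}$: in characteristic $p$ Milnor's coproduct is precisely the rule for composing such series, so the antipode is compositional inversion, $\bar\xi(t) = \xi^{\langle -1\rangle}(t)$. This is the origin of the Frobenius twists that will ultimately deform the naive answer.

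To reach the explicit expansion I first note that $\binom{e(R)}{r_1, r_2, \ldots}$ is exactly the coefficient of $\xi^R$ in $(\xi_1 + \xi_2 + \cdots)^{e(R)}$, so the asserted identity says that, after grouping monomials by the two statistics $e(R)$ and $n(R)$ (the latter being half the internal degree of $\xi^R$), the coefficient of $\xi^R$ in $\bar\xi^k$ equals $\binom{c(n(R)+k+1)}{e(R)}$. I would organize the proof around multiplicativity $\bar\xi^{k}\cdot\bar\xi^{l}=\bar\xi^{k+l}$: since $\xi^{R'}\xi^{R''}=\xi^{R'+R''}$ with $n$ and $e$ additive, this becomes a Vandermonde-type convolution congruence among the coefficients $\binom{c(\,\cdot\,)}{e}$, and once it is established for all integers together with the base values $k=0$ (the counit) and $k=1$ (the antipode recursion above), the whole family is pinned down for every $k\in\Z$.

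The combinatorial heart, and the step I expect to be the main obstacle, is the mod~$p$ evaluation of $\binom{c(m)}{e}$. Writing $c(m) = p^i - m$ with $p^i$ the least power of $p$ exceeding $m$, the base-$p$ digits of $c(m)$ in positions $0,\ldots,i-1$ agree with those of the $p$-adic integer $-m$ while its higher digits vanish; by Lucas's theorem this gives $\binom{c(m)}{e}\equiv\binom{-m}{e} = (-1)^e\binom{m+e-1}{e}\pmod p$ whenever $e < p^i$, a range which, using $n(R)\ge(p-1)\,e(R)$, contains every monomial occurring for $k\ge0$, while $c$ supplies the correct uniform value in the remaining range relevant for negative $k$. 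The convolution step then reduces to Kummer's carry criterion. The difficulty is to control these carries simultaneously for all sequences $R$ and all integers $k$, matching the Frobenius twists in Milnor's coproduct to the truncation built into $c$: it is exactly here that the uncorrected negative-binomial coefficients break down and the function $c$ is forced.
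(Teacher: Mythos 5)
The paper offers no proof of this statement to compare against: Theorem~\ref{xi-antipode} is quoted verbatim from Brown--Davis--Peterson \cite[Theorem~1.2]{b-d-p77}, so your outline has to stand or fall on its own. It falls, for two concrete reasons. First, for negative $k$ you simply assert that ``$c$ supplies the correct uniform value in the remaining range''; but that assertion \emph{is} the theorem --- the entire content of the result is that the truncated function $c(m)=p^i-m$ is the right replacement for the naive negative binomial $\binom{-m}{e}$ outside the carry-free range, and you give no argument for it. (The standard device that handles this is Frobenius periodicity: mod $p$ one has $\bar\xi^{p^N}=1+\sum_{j\ge1}\bar\xi_j^{p^N}$, which has no terms in degrees $0<d<2p^N(p-1)$, so $\bar\xi^{k}$ and $\bar\xi^{k+p^N}$ agree in low degrees; this reduces arbitrary integers $k$ to large positive ones and is precisely what forces the truncation built into $c$. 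Nothing of this sort appears in your sketch.) Second, the proposed reduction of $\bar\xi^k\bar\xi^l=\bar\xi^{k+l}$ to ``a Vandermonde-type convolution'' does not work as stated: the coefficient of $\xi^R$ in the product is a sum over splittings $R=R'+R''$ of terms $\binom{c(n(R')+k+1)}{e(R')}\binom{c(n(R'')+l+1)}{e(R'')}\cdots$, and the upper arguments depend on $n(R')$, $n(R'')$ separately, which vary with the splitting even when $e(R')$, $e(R'')$ are fixed; Vandermonde's identity $\sum_a\binom{x}{a}\binom{y}{e-a}=\binom{x+y}{e}$ requires fixed upper arguments. Worse, even formally the upper arguments add to $-(n(R)+k+l+2)$ rather than the required $-(n(R)+k+l+1)$, so a literal Vandermonde convolution would produce the wrong answer; the true identity holds only because the $n(R')$-dependence and these shifts conspire, and establishing that is the actual work. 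You also never verify the base case $k=1$, i.e.\ that the claimed coefficients satisfy the twisted recursion $\sum_{j}\xi_{n-j}^{p^j}\bar\xi_j=\delta_{n,0}$ --- itself a nontrivial mod-$p$ identity because of the Frobenius exponents.

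To be clear about what is sound: the antipode recursion, the interpretation via composition of Frobenius series, the observation that $F_0=1$, $F_1=\bar\xi$ and multiplicativity of the claimed family $\{F_k\}$ would pin down all integer powers, and the Lucas-theorem evaluation $\binom{c(m)}{e}\equiv(-1)^e\binom{m+e-1}{e}\pmod p$ for $e<p^i$, together with the check via $n(R)\ge(p-1)e(R)$ that this covers all monomials when $k\ge0$, are all correct. But the combinatorial heart --- which you yourself flag as ``the main obstacle'' --- is exactly what is missing, so this is a plan for a proof, not a proof.
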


\medskip

For an abelian group~$G$, let
\[\postdisplaypenalty=10000
p^{\infty}G = \bigcap_{r=1}^{\infty} p^rG,
\]
be the subgroup of elements divisible
by $p^r$ for any $r \geqslant 1$, and let
\[\postdisplaypenalty=10000
G_p = G/p^{\infty}G.
\]
If $G$ is finitely generated, then the latter equals $G$ modulo
torsion of order prime to~$p$.

\begin{theorem}[Adams Spectral Sequence]\label{ASS}
	Let $p$ be a prime, and let $X$ be a ring spectrum of finite type.
	Then there is a natural multiplicative spectral sequence 
	\[
	E_2^{s,t} \cong \Ext_{\Ap}^s\!\big(H^{\sbullet}(X; \Fp), \Fp\big)_t
	\Longrightarrow \pi_{t-s}(X)_p,		
	\]
	where
	\[
	d_r\colon E_r^{s,t} \to E_r^{s+r+1, t+r}.
	\]
\end{theorem}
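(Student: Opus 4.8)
The plan is to construct the spectral sequence from a \emph{canonical Adams resolution} of $X$ and then to identify its pages, establish convergence, and verify multiplicativity. First I would take the unit $\eta\colon S^0\to H\Fp$, let $\bar H$ be its fiber, so that there is a cofiber sequence $\bar H\to S^0\xrightarrow{\eta}H\Fp$, and set $X_s=\bar H^{\wedge s}\wedge X$ and $K_s=H\Fp\wedge X_s$. Smashing the basic cofiber sequence with $X_s$ produces cofiber sequences $X_{s+1}\to X_s\to K_s$, and applying $\pi_{\sbullet}$ yields an unrolled exact couple whose associated spectral sequence has $E_1^{s,t}=\pi_{t-s}(K_s)$. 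Using $\pi_{\sbullet}(H\Fp\wedge Y)\cong H_{\sbullet}(Y;\Fp)$ together with $H_{\sbullet}(\bar H;\Fp)\cong \bar{\Ap^*}$ (the augmentation coideal), the $E_1$-term is $H_{t-s}(X_s;\Fp)\cong\big(\bar{\Ap^*}\big)^{\otimes s}\otimes H_{\sbullet}(X;\Fp)$, the degree-$s$ term of a cobar complex.

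The next step is the computation of $E_2$. Each $H_{\sbullet}(K_s;\Fp)\cong\Ap^*\otimes_{\Fp}H_{\sbullet}(X_s;\Fp)$ is an extended, hence injective, $\Ap^*$-comodule, so the $d_1$-differential makes $(E_1^{\sbullet,t},d_1)$ into the cobar complex computing $\Cotor_{\Ap^*}^{s,t}\!\big(\Fp,H_{\sbullet}(X;\Fp)\big)$. Invoking the structure of $\Ap^*$ from \hr[Theorem]{steenrod-dual} and the finite type hypothesis on $X$ (so that $H^{\sbullet}(X;\Fp)$ is the degreewise $\Fp$-dual of $H_{\sbullet}(X;\Fp)$), I would apply Milnor duality to rewrite comodule $\Cotor$ over $\Ap^*$ as module $\Ext$ over $\Ap$, obtaining $E_2^{s,t}\cong\Ext_{\Ap}^{s}\!\big(H^{\sbullet}(X;\Fp),\Fp\big)_t$ as claimed, with differentials of the displayed bidegree.

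For convergence I would analyze the inverse system $\{X_s\}_{s\ge 0}$. Because $X$ has finite type, each $H_{\sbullet}(X;\Fp)$ is finite-dimensional in every degree and the connectivity of $X_s$ tends to infinity with $s$; consequently $\operatorname{holim}_s X_s$ is mod~$p$ acyclic and the tower resolves the $p$-adic behaviour of $X$. This finiteness also forces the Adams filtration to be finite in each bidegree, killing the relevant $\lim^1$ obstruction, so the spectral sequence converges to the graded group $\pi_{t-s}(X)_p=\pi_{t-s}(X)/p^{\infty}\pi_{t-s}(X)$ of the statement. Finally, the ring structure on $X$ supplies a multiplication $X\wedge X\to X$ compatible with the resolution: since $\bar H^{\wedge s}\wedge\bar H^{\wedge s'}\simeq\bar H^{\wedge(s+s')}$, one gets pairings $X_s\wedge X_{s'}\to X_{s+s'}$ of towers, inducing a pairing of spectral sequences that on $E_2$ agrees with the Yoneda product on $\Ext$.

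I expect the convergence statement to be the principal obstacle: controlling $\operatorname{holim}_s X_s$ and verifying that the limit contributes nothing beyond the infinitely $p$-divisible part is precisely where the finite type hypothesis must be used carefully, and it is the delicate point in making the target $\pi_{t-s}(X)_p$ precise rather than the full $p$-completion. By contrast, the identification of $E_2$ with $\Ext_{\Ap}$ is formal homological algebra once Milnor duality and \hr[Theorem]{steenrod-dual} are in hand.
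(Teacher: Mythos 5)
The paper itself does not prove \hr[Theorem]{ASS}: it is quoted in the Preliminaries as Adams' classical theorem, with the proof left to the literature (Adams' original paper, Switzer Ch.~19, Ravenel Ch.~2). Your outline does follow that standard literature proof: the canonical tower $X_s=\bar H^{\wedge s}\wedge X$, the identification of $(E_1^{\sbullet,\sbullet},d_1)$ with the cobar complex computing $\Cotor_{\Ap^*}\big(\Fp,H_{\sbullet}(X;\Fp)\big)$ (each $H\Fp\wedge X_s$ giving an extended, hence relatively injective, comodule), the finite-type dualization to $\Ext_{\Ap}\big(H^{\sbullet}(X;\Fp),\Fp\big)_t$, and multiplicativity via the pairings $X_s\wedge X_{s'}\to X_{s+s'}$. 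These parts are correct as a sketch; the one bookkeeping point is that $H_{\sbullet}(\bar H;\Fp)\cong\Sigma^{-1}\overline{\Ap^{*}}$ rather than $\overline{\Ap^{*}}$, the shift being exactly what makes the internal degree come out to be~$t$.

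The genuine gap is in the convergence step, and it is not merely the ``delicate point'' you flag: the two assertions it rests on are false. First, the connectivity of $X_s=\bar H^{\wedge s}\wedge X$ does \emph{not} tend to infinity: $\bar H$ is connective with $\pi_0(\bar H)\cong\Z$ and $H_0(\bar H;\Fp)\cong(\overline{\Ap^{*}})_1\neq0$ (the class dual to $\xi_1$, resp.\ $\tau_0$), so every $X_s$ has exactly the connectivity of $X$. Second, the Adams filtration is not finite in any degree: already for $X=S^0$ the element $p^s\in\pi_0(S^0)=\Z$ has filtration exactly $s$, so $F^s\pi_0=p^s\Z\neq0$ and $E_\infty^{s,s}\neq0$ for every $s$. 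Indeed, if either claim held, the spectral sequence would abut to $\pi_n(X)$ itself rather than to $\pi_n(X)_p$ --- and the distinction you yourself draw between $\pi_n(X)_p$ and the $p$-completion exists only because the filtration is infinite and incomplete, contradicting your claim. Likewise the ${\lim}^1$ term cannot be ``killed'': for the sphere, ${\lim}^1\{p^s\Z\}$ is the nonzero group $\Z^{\wedge}_p/\Z$ (here $\Z^{\wedge}_p$ denotes the $p$-adic integers), which is precisely why the classical theorem asserts only weak convergence, namely that $E_\infty^{s,t}$ is the associated graded of a Hausdorff but incomplete filtration on $\pi_{t-s}(X)_p$. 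The correct argument, absent from your sketch, is: (a) multiplication by $p$ raises Adams filtration by at least one, whence $p^{\infty}\pi_n(X)\subseteq\bigcap_s F^s\pi_n(X)$; and (b) the finite-type divisibility lemma --- the same one the paper invokes as \cite[Lemma~20.28]{swit75} in the proof of \hr[Theorem]{pi-msu} --- produces for each $n$ some $t$ with $F^{t}\pi_n(X)\otimes_{\Z}\Fp=0$; since a finitely generated group $G$ with $G=pG$ is finite of order prime to $p$, this forces $F^{t}\pi_n(X)\subseteq p^{\infty}\pi_n(X)$. Together (a) and (b) give $\bigcap_s F^s\pi_n(X)=p^{\infty}\pi_n(X)$, which is exactly the stated abutment $\pi_n(X)_p$; no appeal to homotopy inverse limits or to increasing connectivity occurs, or could occur, in this argument.
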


\medskip

An element $x$ in $\Ap^*$-comodule is called \emph{primitive} if it maps to $1\otimes x$ under the structure map.

Recall that $H_{\sbullet}(MU; \Z)$is isomorphic to $\Z[b_1, b_2, \ldots]$,
where $b_i = j_*\big((c_1^i)^*\big)$ and $j\colon \C P^\infty \to MU$ is the canonical map.
The following theorem describes the $\Ap^*$-coalgebra structure of $H_{\sbullet}(MU; \Fp)$.

\begin{theorem}[{\cite[Chapter~20]{swit75}}]\label{thom-homology}
	Let $PH_{\sbullet}(MU; \Fp)$ be the subalgebra of primitive elements
	of $H_{\sbullet}(MU; \Fp)$. Then
	\[
		PH_{\sbullet}(MU; \Fp) = \Fp[x_k\,|\; k\geqslant 1, k \neq p^t - 1],\quad\deg x_k = 2k.
	\]
	Furthermore, 
	\[
	H_{\sbullet}(MU; \Fp) \cong \Ap'\otimes_{\Fp} PH_{\sbullet}(MU; \Fp),
	\]
	as $\Fp$-algebras and $\Ap^*$-comodules,
	where
	\[
		\Ap' = (\Ap/(\beta))^*\cong
		\begin{cases}
			\mathbb F_2[\xi_1^2, \xi_2^2, \ldots]\quad&\text{if $p = 2$;}
			\\
			\Fp[\xi_1, \xi_2, \ldots]&\text{if $p$ is odd.}
		\end{cases}
	\]
\end{theorem}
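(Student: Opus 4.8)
\textit{Proof proposal.} The plan is to compute the $\Ap^*$-coaction $\nu$ on $H_{\sbullet}(MU; \Fp) = \Fp[b_1, b_2, \ldots]$ explicitly on the polynomial generators $b_i = j_*\big((c_1^i)^*\big)$, read off the primitive subalgebra from this formula, and then exhibit the claimed tensor splitting as an isomorphism of comodule algebras.

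First I would reduce the coaction to the subalgebra $\Ap'$. Since both the Thom class and the Chern classes reduce from integral cohomology, the Bockstein acts trivially on $H^{\sbullet}(MU; \Fp)$; dualizing, the coaction factors through $\Ap' \otimes_{\Fp} H_{\sbullet}(MU; \Fp)$, where $\Ap' = (\Ap/(\beta))^*$ is the sub-Hopf-algebra $\Fp[\xi_1, \xi_2, \ldots]$ for odd $p$ (resp.\ $\Fp[\xi_1^2, \xi_2^2, \ldots]$ for $p = 2$) appearing in \hr[Theorem]{steenrod-dual}. In particular the exterior generators $\tau_i$ never occur.

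Next I would compute the coaction on generators via $\C P^\infty$. Using the canonical map $j\colon \C P^\infty \to MU$ and the total power operation on $H^{\sbullet}(\C P^\infty; \Fp) = \Fp[x]$, $\deg x = 2$ (where $P^0 x = x$, $P^1 x = x^p$ by property~(5), and $P^k x = 0$ for $k \geqslant 2$ by property~(4)), the Cartan formula gives $P(x^n) = x^n(1 + x^{p-1})^n$. Dualizing this action and pushing forward along $j_*$ packages the coaction into a single generating-function identity for $b(s) = \sum_{i \geqslant 0} b_i s^i$ (with $b_0 = 1$), whose coefficients are universal polynomials in the conjugates $\bar\xi_i$. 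The expansion of $\bar\xi^k$ provided by \hr[Theorem]{xi-antipode} is exactly what converts this identity into explicit coefficients of the form $\nu(b_n) = \sum_k (\cdots \bar\xi \cdots) \otimes b_{n-k}$.

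Finally I would extract the primitives and establish the splitting. An element is primitive precisely when its coaction equals $1 \otimes (-)$; from the explicit formula, the degrees in which non-primitive directions are forced are exactly $2(p^t - 1) = \deg \xi_t$, and a degree count shows that the subalgebra of primitives is polynomial on one generator $x_k$ in each degree $2k$ with $k \neq p^t - 1$, which is the first displayed formula. To finish, I would choose polynomial generators $b_{p^t - 1}$ whose coaction projects onto $\xi_t$ modulo decomposables; these identify $\Ap'$ with the subalgebra they generate and realize $H_{\sbullet}(MU; \Fp) \cong \Ap' \otimes_{\Fp} PH_{\sbullet}(MU; \Fp)$ as an extended comodule algebra, the coaction acting only on the left tensor factor. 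The hard part is this last step: matching the multiplicative structure with the comodule structure so that the comparison map is an isomorphism, i.e.\ verifying that the generators $b_{p^t-1}$ carry the coaction coefficients onto $\xi_t$ correctly. This is precisely where the antipode expansion of \hr[Theorem]{xi-antipode} must be invoked to control the interaction of the algebra generators with the coaction.
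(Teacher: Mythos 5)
You should know at the outset that the paper contains no proof of this statement: it is imported as background, with a citation to Switzer, Chapter~20, and the closest the paper comes to proving anything of this kind is \hr[Theorem]{preMSU-coalgebra} together with \hr[Corollary]{411}, where the refined ($MSU$-compatible) version is established via the Brown--Davis--Peterson formulas. Measured against that standard route, the first two stages of your proposal are correct and are exactly the usual argument: since $H^{\sbullet}(MU;\Fp)$ consists of reductions of integral classes (equivalently, is even and torsion-free), the whole two-sided ideal $(\beta)$ acts trivially, so the coaction lands in $\Ap'\otimes_{\Fp} H_{\sbullet}(MU;\Fp)$; and the coaction on the generators $b_n$ is computed by naturality from $j\colon \C P^\infty\to MU$, the total power operation, and dualization, with the conjugates $\bar\xi_i$ appearing --- this is precisely the mechanism behind \hr[Theorem]{chern-coaction} and \hr[Corollary]{411}.

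The gap is concentrated in your last step, in two places. First, the logic is inverted: you cannot read off $PH_{\sbullet}(MU;\Fp)$ ``by a degree count'' from the coaction formula on generators, because deciding which polynomials in the $b_i$ are primitive is exactly as hard as the theorem itself; in all standard treatments the description of the primitives is a \emph{consequence} of the splitting, not an input to it. Second, the splitting argument is missing its actual engine. What makes the comparison map $G=(\id\otimes\pi)\circ\nu\colon H_{\sbullet}(MU;\Fp)\to \Ap'\otimes_{\Fp}H_{\sbullet}(MU;\Fp)/(\text{generators in degrees }2(p^t-1))$ an isomorphism is a finite-type counting argument: $G$ is a map of comodule algebras, it is surjective because $G(b_{p^t-1})=\pm\bar\xi_t\otimes 1$ and $G(b_n)=1\otimes b_n$ modulo decomposables, and both sides are polynomial algebras with exactly one generator in each even degree (the $\xi_t$ supply degrees $2(p^t-1)$, the quotient supplies the rest), so their Poincar\'e series agree and a surjection of finite-type graded vector spaces with equal Poincar\'e series is an isomorphism. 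This counting step appears nowhere in your proposal; instead you locate the difficulty in the antipode expansion of \hr[Theorem]{xi-antipode}, but that theorem only computes coaction coefficients (the easy, purely computational part, done in \hr[Corollary]{411}) and cannot by itself produce the isomorphism. Moreover, the step you describe as ``identify $\Ap'$ with the subalgebra generated by the $b_{p^t-1}$'' fails as literally stated: for the standard generators, $\Fp[b_{p^t-1}\,|\;t\geqslant 1]$ is \emph{not} a subcomodule --- for instance $\nu(b_{p^2-1})$ contains a term $c\otimes b_{2p-2}$ with $c$ a nonzero scalar multiple of $\bar\xi_1^{p-1}$ (the multinomial coefficient is $\binom{2p-1}{p-1}\equiv 1 \bmod p$), and $b_{2p-2}$ is an independent polynomial generator. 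To get a coaction-closed copy of $\Ap'$ one must replace the $b_{p^t-1}$ by the Chern-class duals $\Phi(c_{p^t-1}^*)$, which is exactly why the paper (following Brown--Davis--Peterson and Pengelley) works with those elements in \hr[Corollary]{411}, or else abandon the subalgebra picture entirely and argue with the quotient map $G$ as above.
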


\section{Homology of the Thom spectrum $MSU$}\label{sunitary}
We begin with collecting the necessary information about
homology of the classifying space $BSU$.

\begin{lemma}{{\cite[Lemma~2.4]{adams75}}}\label{homology-bsu}
	There is an algebra isomorphism
	\[
		H_{\sbullet}(BSU; \Z) \cong \Z[y_2, y_3, \ldots].
	\]
	for suitable $y_i \in H_{2i}(BSU; \Z)$, $i = 2, 3, \ldots$
\end{lemma}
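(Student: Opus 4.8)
The plan is to deduce the statement from the well-understood structure of the classifying space $BU$, namely $H^{\sbullet}(BU;\Z)\cong\Z[c_1,c_2,\ldots]$ and $H_{\sbullet}(BU;\Z)\cong\Z[b_1,b_2,\ldots]$ with $\deg c_i=\deg b_i=2i$, by comparing $BSU$ with $BU$ through the circle fibration coming from the determinant. The inclusion $SU\hookrightarrow U$ has quotient $U/SU\cong U(1)\simeq S^1$, so there is a fibration
\[
S^1\longrightarrow BSU\xrightarrow{\ \pi\ } BU,
\]
whose Euler class is the universal first Chern class $c_1\in H^2(BU;\Z)$. Both the cohomology and the homology statements can then be read off from the associated Gysin sequences.

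First I would compute the cohomology ring. The cohomology Gysin sequence reads
\[
\cdots\to H^{n-2}(BU;\Z)\xrightarrow{\,\cup c_1\,}H^{n}(BU;\Z)\xrightarrow{\,\pi^*\,}H^{n}(BSU;\Z)\to H^{n-1}(BU;\Z)\to\cdots,
\]
and since $c_1$ is a polynomial generator of $H^{\sbullet}(BU;\Z)$, cup product with $c_1$ is injective. Hence the connecting maps vanish, $\pi^*$ is surjective with kernel the ideal $(c_1)$, and
\[
H^{\sbullet}(BSU;\Z)\cong H^{\sbullet}(BU;\Z)/(c_1)\cong\Z[c_2,c_3,\ldots].
\]
In particular $H^{\sbullet}(BSU;\Z)$ is torsion-free and of finite type, so by the universal coefficient theorem $H_{\sbullet}(BSU;\Z)$ is its graded dual: torsion-free, of finite type, concentrated in even degrees, with the Poincar\'e series of $\Z[y_2,y_3,\ldots]$, $\deg y_i=2i$.

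To pin down the algebra structure I would use the homology Gysin sequence, in which $\cap c_1$ is surjective (being dual to the injective $\cup c_1$). It therefore identifies $\pi_*$ as a monomorphism of rings
\[
H_{\sbullet}(BSU;\Z)\cong\ker\bigl(c_1\cap(-)\bigr)\subseteq H_{\sbullet}(BU;\Z)=\Z[b_1,b_2,\ldots].
\]
Because $c_1$ is primitive and dual to $b_1$, the operator $D:=c_1\cap(-)$ is a derivation acting by $D(b_n)=b_{n-1}$ (with $b_0=1$). Thus the whole problem reduces to showing that $\ker D$ is a polynomial subalgebra on classes in degrees $4,6,8,\ldots$.

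The main obstacle is the passage from $\Q$ to $\Z$ in this last step. Over $\Q$ the kernel is transparent: the power-sum classes $p_k$ defined by $\sum_{k\ge1}\tfrac{p_k}{k}t^k=\log\bigl(\sum_i b_it^i\bigr)$ satisfy $D(p_1)=1$ and $D(p_k)=0$ for $k\ge2$, so $\ker D\otimes\Q=\Q[p_2,p_3,\ldots]$ with $\deg p_k=2k$. The $p_k$ are not integral, however, so one must produce genuine integral polynomial generators $y_i\in H_{2i}(BSU;\Z)$, $i\ge2$. This is where I would invoke the Hopf-algebra structure: $BSU$ is an infinite loop space, so $H_{\sbullet}(BSU;\Z)$ is a connected, graded-commutative, torsion-free Hopf algebra of finite type concentrated in even degrees, and the Borel structure theorem (whose even-degree and torsion-free hypotheses exclude exterior and divided-power phenomena), together with the cohomological freeness established above, forces it to be polynomial; the Poincar\'e-series count then fixes the generators in degrees $2i$, $i\ge2$, as claimed.
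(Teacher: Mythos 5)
First, a point of reference: the paper does not prove this lemma at all --- it is quoted from Adams \cite[Lemma~2.4]{adams75} --- so your argument has to stand on its own. Most of it does: the Gysin sequence of the circle bundle $S^1\to BSU\to BU$ with Euler class $c_1$ indeed gives $H^{\sbullet}(BSU;\Z)\cong\Z[c_2,c_3,\ldots]$, and dually it identifies $H_{\sbullet}(BSU;\Z)$ with the kernel of the derivation $D=c_1\cap(-)$, $D(b_n)=b_{n-1}$, on $H_{\sbullet}(BU;\Z)=\Z[b_1,b_2,\ldots]$; the rational statement $\ker D\otimes\Q=\Q[p_2,p_3,\ldots]$ is also correct. (One factual slip: the power sums \emph{are} integral --- Newton's identities express $p_k$ as an integral polynomial in the $b_i$. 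The genuine difficulty is that they fail to generate $\ker D$ over $\Z$: for instance $\frac{1}{2}(p_2^2-p_4)$ is an integral class lying in $\ker D$ but not in $\Z[p_2,p_3,\ldots]$.)

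The final step, however, is a genuine gap, and it sits exactly where the content of the lemma lies. Borel's structure theorem is a theorem about Hopf algebras over a (perfect) \emph{field}; there is no such structure theorem over $\Z$, and the hypotheses you list --- connected, graded-commutative, torsion-free, finite type, evenly graded, with polynomial dual --- do \emph{not} imply polynomiality. The standard counterexample satisfies every one of them: $H_{\sbullet}(\C P^{\infty};\Z)\cong\Gamma_{\Z}[\beta_1]$ is the Hopf dual of the polynomial algebra $H^{\sbullet}(\C P^{\infty};\Z)=\Z[x]$, is torsion-free, evenly graded, of finite type, and is the homology of an infinite loop space, yet it is not polynomial, since $\beta_1^2=2\beta_2$ and its module of indecomposables has $2$-torsion. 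So your parenthetical claim that torsion-freeness and evenness ``exclude divided-power phenomena'' is precisely what fails over $\Z$; ruling out divided-power behaviour in $H_{\sbullet}(BSU;\Z)$ is the whole point, and it is not formal. A correct finish runs prime by prime: the same Gysin argument with $\Fp$-coefficients embeds $H_{\sbullet}(BSU;\Fp)$ as an evenly graded sub-Hopf-algebra of $\Fp[b_1,b_2,\ldots]$; being an integral domain it can have no truncated or exterior factors, so Borel's theorem over the field $\Fp$ forces it to be polynomial, and the Poincar\'e series forces exactly one generator in each degree $2i$, $i\geqslant 2$. Since this holds for every $p$ and rationally, $QH_{\sbullet}(BSU;\Z)$ is free of rank one in each degree $2i$, $i\geqslant 2$; lifting generators gives a map $\Z[y_2,y_3,\ldots]\to H_{\sbullet}(BSU;\Z)$ that is surjective by induction on degree and injective because it is a rational isomorphism from a torsion-free algebra. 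That prime-by-prime argument (in essence Adams's) is what your appeal to ``Borel over $\Z$'' is silently replacing.
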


\begin{remark}
	\hr[Lemma]{homology-bsu} can be easily generalised. Namely,
	$E_{\sbullet}(BSU) \cong \pi_{\sbullet}(E) [Y_2, Y_3, \ldots]$ for any
	complex oriented spectrum~$E$.
\end{remark}

\begin{lemma}
	Homology $H_{\sbullet}(BSU; \Z)$ is a subalgebra of $H_{\sbullet}(BU; \Z)$.
\end{lemma}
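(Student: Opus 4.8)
The plan is to show that the ring homomorphism $i_{\sbullet}\colon H_{\sbullet}(BSU;\Z)\to H_{\sbullet}(BU;\Z)$ induced by the inclusion $SU\hookrightarrow U$ is injective; its image is then the desired subalgebra. First I would check that $i_{\sbullet}$ is a morphism of algebras. The products on $H_{\sbullet}(BU;\Z)$ and $H_{\sbullet}(BSU;\Z)$ are the Pontryagin products coming from the H-space structures given by Whitney sum, i.e.\ from the block-sum group homomorphisms $U\times U\to U$ and $SU\times SU\to SU$. Since the inclusion is compatible with these, the map $BSU\to BU$ is an H-map, so $i_{\sbullet}$ respects products (and these Pontryagin products are exactly the algebra structures appearing in \hr[Lemma]{homology-bsu} and in the polynomial description of $H_{\sbullet}(BU;\Z)$).

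For injectivity I would pass to cohomology and dualize. We have $H^{\sbullet}(BU;\Z)=\Z[c_1,c_2,\ldots]$ and $H^{\sbullet}(BSU;\Z)=\Z[c_2,c_3,\ldots]$, and since the universal $SU$-bundle has vanishing first Chern class, the ring map $i^{\sbullet}$ sends $c_1\mapsto 0$ and $c_k\mapsto c_k$ for $k\ge 2$; in particular $i^{\sbullet}$ is surjective in every degree. Both spaces have homology that is free and finitely generated in each degree, so by the universal coefficient theorem the Kronecker pairing identifies $H^n$ with $\Hom(H_n,\Z)$ perfectly, and under this identification $i_{\sbullet}$ is the transpose of $i^{\sbullet}$. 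The transpose of a surjection of finitely generated free abelian groups is split injective, so $i_{\sbullet}$ is injective, which together with the first step exhibits $H_{\sbullet}(BSU;\Z)$ as a subalgebra of $H_{\sbullet}(BU;\Z)$.

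Alternatively one could argue geometrically: the determinant $\det\colon U\to S^1=U(1)$ together with its section $U(1)\hookrightarrow U$ yields a homotopy equivalence $BSU\times \C P^{\infty}\xrightarrow{\ \simeq\ } BU$, $(x,z)\mapsto i(x)\cdot j(z)$, under which $i_{\sbullet}$ becomes the inclusion $x\mapsto x\otimes 1$ and the Künneth theorem gives injectivity at once. I expect the only genuine subtlety to be the clean justification of the duality step, namely that homology is the honest linear dual of cohomology here, which the finite-type freeness of both $BU$ and $BSU$ guarantees; the identification of $i^{\sbullet}$ with the quotient map killing $c_1$ is routine from the standard cohomology rings and the fact that $i^{\sbullet}$ is a ring homomorphism.
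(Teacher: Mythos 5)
Your proof is correct and follows the same skeleton as the paper's: the paper likewise uses the H-space structure on the fibration $BSU \xrightarrow{f} BU \xrightarrow{\det} \C P^\infty$ to see that $f_*$ is an algebra homomorphism, and then concludes that $H_{\sbullet}(BSU; \Z) \cong \im f_*$ is a subalgebra. The only real difference is that the paper simply asserts that $f_*$ is monic, whereas you supply an actual proof of injectivity --- either by dualizing the surjection $i^{\sbullet}\colon \Z[c_1,c_2,\ldots] \to \Z[c_2,c_3,\ldots]$, $c_1 \mapsto 0$, via the universal coefficient theorem (legitimate here, since both spaces have degreewise finitely generated free homology, so homology is the honest dual of cohomology and $i_{\sbullet}$ is the transpose of $i^{\sbullet}$), or via the splitting $BU \simeq BSU \times \C P^\infty$ induced by the section of $\det$ and the K\"unneth theorem. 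Both of your arguments are valid, and either one fills in the step the paper leaves unjustified.
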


\begin{proof}
	Consider the canonical fibration
	$BSU \xrightarrow{f} BU \xrightarrow{\det} \C P^\infty$.
	The latter map corresponds to the first Chern class
	$c_1 \in H^2(BU; \Z) \cong [BU, \C P^\infty]$.
	Since the maps in the fibration are maps of $H$-spaces,
	they induce algebra homomorphisms in homology.
	Note that
	$f_*\colon H_{\sbullet}(BSU; \Z) \to H_{\sbullet}(BU; \Z)$
	is monic. Therefore,
	$H_{\sbullet}(BSU; \Z) \cong \im f_*$
	is a subalgebra of $H_{\sbullet}(BU; \Z)$.	
\end{proof}

\begin{lemma}\label{p_th}
	The $p$-th power of any element $H_{\sbullet} (BU; \Fp)$
	lies in $H_{\sbullet}(BSU; \Fp)$.
\end{lemma}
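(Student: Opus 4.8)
The plan is to realise $\im f_{*}$ as the kernel of a single derivation of the Pontryagin algebra $H_{\sbullet}(BU;\Fp)$, and then to observe that $p$-th powers are automatically annihilated by any derivation in characteristic~$p$. Here $f\colon BSU\to BU$ is the map from the fibration above, whose induced map $f_{*}$ is monic.

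First I would identify the image of $f_{*}$ in cohomological terms. In cohomology $f^{*}\colon H^{\sbullet}(BU;\Fp)=\Fp[c_1,c_2,\dots]\to H^{\sbullet}(BSU;\Fp)=\Fp[c_2,c_3,\dots]$ is the quotient homomorphism sending $c_1\mapsto 0$ and $c_i\mapsto c_i$ for $i\ge 2$, so its kernel is exactly the ideal $(c_1)$. Working over the field $\Fp$ and using that $BU$ is of finite type, $H_{\sbullet}$ is the graded dual of $H^{\sbullet}$ and $f_{*}$ is the transpose of $f^{*}$; hence $\im f_{*}=\operatorname{Ann}(\ker f^{*})=\operatorname{Ann}\big(c_1\,H^{\sbullet}(BU;\Fp)\big)$. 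By the adjunction $\langle c_1\cup v,x\rangle=\langle v,c_1\cap x\rangle$ between cup and cap products, this annihilator is precisely $\ker(c_1\cap-)$, where $c_1\cap-\colon H_{\sbullet}(BU;\Fp)\to H_{\sbullet-2}(BU;\Fp)$. Thus $\im f_{*}=\ker(c_1\cap-)$, and since $f_{*}$ is monic it is an algebra isomorphism onto $H_{\sbullet}(BSU;\Fp)$.

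Next I would verify that $D:=c_1\cap-$ is a derivation of the Pontryagin product. The Whitney-sum $H$-space structure on $BU$ makes $H^{\sbullet}(BU;\Fp)$ a Hopf algebra; since $c_1$ is additive under direct sums of bundles, its coproduct is $\Delta c_1=c_1\otimes 1+1\otimes c_1$, so $c_1$ is primitive. As $H_{\sbullet}(BU;\Fp)$ is a module-algebra over $H^{\sbullet}(BU;\Fp)$ under the cap product, one has $\alpha\cap(x\cdot y)=\sum_{(\alpha)}(\alpha'\cap x)\cdot(\alpha''\cap y)$ for the Pontryagin product; applied to the primitive $\alpha=c_1$ this reads $D(x\cdot y)=D(x)\cdot y+x\cdot D(y)$, i.e. $D$ is a derivation. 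Consequently $D(x^{p})=p\,x^{p-1}D(x)=0$ for every $x\in H_{\sbullet}(BU;\Fp)$, because $p=0$ in $\Fp$. Therefore $x^{p}\in\ker D=\im f_{*}=H_{\sbullet}(BSU;\Fp)$, which is the assertion.

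The main obstacle is the bookkeeping of the two structural inputs rather than any hard computation: on the one hand the exact identification $\ker f^{*}=(c_1)$ together with field duality, needed to pass from cohomology to the cap-product kernel in homology; and on the other hand the Hopf-algebra compatibility showing that capping with the primitive class $c_1$ is a derivation of the Pontryagin product. Once these are in place, the characteristic-$p$ vanishing $D(x^{p})=0$ is immediate and requires no case analysis on generators.
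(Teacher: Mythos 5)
Your proof is correct, and it takes a genuinely different, more elementary route than the paper's. Both arguments dualise the action of $c_1$ on $H^{\sbullet}(BU)$ coming from the fibration $BSU \xrightarrow{f} BU \xrightarrow{\det} \C P^{\infty}$, but they use different amounts of that structure and different characteristic-$p$ facts. The paper dualises the full $\Z[c_1]$-module structure into a coaction $\psi\colon H_{\sbullet}(BU;\Z)\to\Gamma_{\Z}[\beta_1]\otimes_{\Z}H_{\sbullet}(BU;\Z)$ of the divided polynomial algebra, identifies $H_{\sbullet}(BSU;\Z)$ with the subalgebra of $\psi$-primitives, and concludes from multiplicativity of $\psi$ together with the vanishing of $p$-th powers of positive-degree elements of $\Gamma_{\Z}[\beta_1]\otimes_{\Z}\Fp$. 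You keep only the first-order part of this structure: the single operator $D=c_1\cap{-}$, whose kernel you identify with $\im f_*$ via $\im f_*=\operatorname{Ann}(\ker f^*)$ and the cup--cap adjunction, and which you show is a derivation of the Pontryagin product using primitivity of $c_1$ and the projection formula; the Leibniz rule $D(x^p)=p\,x^{p-1}D(x)=0$ then finishes. The two descriptions of $\im f_*$ agree: since $c_1^k\cap x=c_1^{k-1}\cap(c_1\cap x)$, the $\psi$-primitives are exactly $\ker(c_1\cap{-})$, and your derivation identity is the $k=1$ member of the family of higher Leibniz rules that multiplicativity of $\psi$ packages together. Your version buys self-containedness --- no divided power algebras, just cap products and duality over a field --- while the paper's version records the stronger structural statement (a $\Gamma_{\Z}[\beta_1]$-comodule structure whose primitives are $H_{\sbullet}(BSU)$), which matches the comodule-theoretic viewpoint running through the rest of the section.
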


\begin{proof}
	Consider cohomology $H^{\sbullet}(BU;\Z)$ as a $\Z[c_1]$-module.
	Dualising the action map
	$\Z[c_1]\otimes_{\Z} H^{\sbullet}(BU;\Z) \to H^{\sbullet}(BU; \Z)$
	we obtain a coaction
	\[
	\psi\colon H_{\sbullet}(BU; \Z)
	\to
	\Gamma_\Z[\beta_1]\otimes_\Z H_{\sbullet}(BU; \Z),
	\]
	where $\Gamma_\Z[\beta_1]$ is a divided polynomial algebra.
	The subcoalgebra $P_\psi H_{\sbullet}(BU; \Z)$ of primitive elements
	is isomorphic to $\im f_* \cong H_{\sbullet}(BSU; \Z)$.
	In particular, since for any non-constant
	$x \in \Gamma_\Z[\beta_1]\otimes_\Z \Fp$
	its $p$-th power is zero, the $p$-th power of any element of
	$H_{\sbullet}(BU; \Fp)$ actually lies in $H_{\sbullet}(BSU; \Fp)$.	
\end{proof}

The Thom isomorphism
$\Phi\colon H_{\sbullet}(BSU; \Z) \to H_{\sbullet}(MSU; \Z)$
is an algebra isomorphism, which implies
\[
	H_{\sbullet}(MSU; \Z) \cong \Z[Y_2, Y_3, \ldots].
\]
It follows that the $p$-th power of any element of $H_{\sbullet}(MU; \Fp)$
lies in $H_{\sbullet}(MSU; \Fp)$.

The following theorem describes specific polynomial generators
of $H_{\sbullet}(MU; \Fp)$ which are compatible with
the inclusion $f_*\colon H_{\sbullet}(MSU; \Fp) \to H_{\sbullet}(MU; \Fp)$.

\begin{theorem}[cf.~\cite{peng82}]\label{preMSU-coalgebra}
	Let $p$ be an odd prime.
	There are elements $z_n \in H_{2n}(MU; \Fp)$, $n \ge 1$, such that the following hold
	\begin{itemize}[leftmargin=0.06\textwidth]
		\item[\textup(i\textup)]
		$H_{\sbullet}(MU; \Fp) \cong \Fp[z_1, z_2, \ldots]$\textup;
		\item[\textup(ii\textup)]
		the composite
		\[
		G\colon H_{\sbullet}(MU;\Fp)
		\xrightarrow{\rho}
		\Ap'\otimes_{\Fp}H_{\sbullet}(MU;\Fp)
		\xrightarrow{\id\otimes \pi}
		\Ap'\otimes_{\Fp}H_{\sbullet}(MU;\Fp)/(z_i, i = p^t - 1),
		\]
		where $\rho$ is the left coaction map and $\pi$ is the canonical projection,
		is an isomorphism of $\Fp$-algebras and $\Ap^*$-comodules.
		Here the structure of an $\Ap^*$-comodule of the latter algebra
		is given by the coaction on the first tensor factor\textup;
		\item[\textup(iii\textup)]
		$G(z_{p^t-1}) = - \bar\xi_t\otimes 1$, $t \ge 1$, where  $\bar\xi_t$ is the Hopf conjugate of $\xi_t$, and
		$G(z_n) = 1\otimes z_n$, $n \neq p^t - 1$\textup;
		\item[\textup(iv\textup)]
		if $Y_n \in H_{2n}(MU; \Fp)$, $n \ge 2$, are defined by
		\[
		Y_n =
		\begin{cases}
		z_{n/p}^p\quad&\text{if $n = p^t$};
		\\					
		z_n&\text{otherwise};
		\end{cases}
		\]
		then
		$H_{\sbullet}(MSU;\Fp)
		{}\cong{}
		\Fp[Y_2, Y_3, \ldots] \subset H_{\sbullet}(MU; \Fp)$.
	\end{itemize}
\end{theorem}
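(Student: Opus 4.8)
The plan is to realise the abstract splitting of \hr[Theorem]{thom-homology} by \emph{named} generators. \hr[Theorem]{thom-homology} already gives an isomorphism $H_{\sbullet}(MU;\Fp)\cong\Ap'\otimes_{\Fp}PH_{\sbullet}(MU;\Fp)$ of algebras and $\Ap^*$-comodules with the coaction carried by $\Ap'$; what (i)--(iii) add is that the generator lying over $\xi_t$ can be taken to be the geometric class in degree $2(p^t-1)$ and that its image under $G$ is the \emph{explicit} conjugate $-\bar\xi_t\otimes 1$. I would therefore work throughout with the standard generators $b_i=j_*\big((c_1^i)^*\big)$ and correct them degree by degree.

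First I would record the left coaction $\rho$ on the $b_i$. Since $j\colon\C P^\infty\to MU$ is a map of spectra, $\rho(b_n)=(\id\otimes j_*)\nu\big((c_1^n)^*\big)$, where $\nu$ is the left $\Ap^*$-coaction on $H_{\sbullet}(\C P^\infty;\Fp)$; the latter is dual to the Steenrod action $P^k(x^n)=\binom{n}{k}x^{n+k(p-1)}$ on $H^{\sbullet}(\C P^\infty;\Fp)=\Fp[x]$. Passing to the \emph{left} coaction introduces the antipode, so the component of $\rho(b_{p^t-1})$ in $\Ap'\otimes\Fp\cdot 1$ is $-\bar\xi_t\otimes 1$; this is precisely where \hr[Theorem]{xi-antipode} enters, to express $\bar\xi_t=\chi(\xi_t)=-\xi_t+(\text{decomposables})$ and to pin down the sign. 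I would then define the $z_n$ by induction on degree: for $n\neq p^t-1$ let $z_n$ be a primitive polynomial generator (these span $PH_{\sbullet}(MU;\Fp)$ by \hr[Theorem]{thom-homology}, so $\rho(z_n)=1\otimes z_n$ and $G(z_n)=1\otimes z_n$), and for $n=p^t-1$ subtract from $b_{p^t-1}$ a polynomial in the previously constructed $z_m$ ($m<n$) chosen so that every term of $(\id\otimes\pi)\rho$ other than $-\bar\xi_t\otimes 1$ is cancelled; the required correction exists because $G$ is already an isomorphism in lower degrees. Since each $z_n$ differs from $b_n$ by decomposables of the same degree, the $z_n$ are again polynomial generators, which gives (i) and (iii).

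Part (ii) is then formal. The algebra map $G$ sends the generators $z_{p^t-1}\mapsto-\bar\xi_t\otimes 1$ and $z_n\mapsto 1\otimes z_n$ for $n\neq p^t-1$; as $\{-\bar\xi_t\}$ is again a polynomial generating set of $\Ap'$ (because $-\bar\xi_t=\xi_t+\text{decomposables}$) and the remaining $z_n$ are polynomial generators of the quotient $H_{\sbullet}(MU;\Fp)/(z_{p^t-1})$, $G$ carries a generating set to a generating set and is a degreewise isomorphism by a Poincar\'e-series count; compatibility with the coactions follows from coassociativity of $\rho$, the target coaction being that of $\Ap'$. For (iv) I would use the characterisation from the proof of \hr[Lemma]{p_th}: $H_{\sbullet}(MSU;\Fp)=\im f_*$ is the subalgebra of primitives for the $\Gamma_{\Fp}[\beta_1]$-coaction dual to multiplication by $c_1$. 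Each $Y_{p^t}=z_{p^{t-1}}^p$ is a $p$-th power, hence lies in $H_{\sbullet}(MSU;\Fp)$ by the consequence of \hr[Lemma]{p_th} noted after the Thom isomorphism; the classes $z_n$ with $n\neq p^t$ are to be arranged inside $H_{\sbullet}(MSU;\Fp)$ as well, and a comparison of Poincar\'e series against the known $H_{\sbullet}(MSU;\Fp)\cong\Fp[Y_2,Y_3,\ldots]$ shows the $Y_n$ exhaust it.

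I expect the main obstacle to be the coaction computation underlying (iii): isolating exactly the class $-\bar\xi_t$ in the geometric left coaction of $b_{p^t-1}$ and controlling, at every special degree simultaneously, the decomposable corrections that the projection $\pi$ must annihilate. The antipode formula of \hr[Theorem]{xi-antipode} is what makes the conjugate term explicit. A genuine secondary difficulty lies in (iv): the two coactions (the Steenrod $\Ap^*$-coaction and the $c_1$-coaction cutting out $MSU$) do not commute on the nose, and in the degrees $2p^t$ the generator of $H_{\sbullet}(MSU;\Fp)$ is forced to be a $p$-th power $z_{p^{t-1}}^p$ rather than an independent class---this char-$p$ phenomenon is exactly what the case distinction in the definition of $Y_n$ encodes, and verifying it is the crux of (iv).
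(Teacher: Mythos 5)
Your sketch handles (i)--(iii) plausibly, but it has a genuine gap at (iv), and (iv) is where the actual content of the theorem lies. All of your generators are produced \emph{abstractly} inside $H_{\sbullet}(MU;\Fp)$: for $n\neq p^t-1$ you take the primitive generators supplied by \hr[Theorem]{thom-homology}, and for $n=p^t-1$ you correct $b_{p^t-1}$ by an unspecified decomposable chosen to normalise its $G$-image. Nothing in either construction places these classes in the subalgebra $H_{\sbullet}(MSU;\Fp)$. Being $\Ap^*$-primitive does not help: membership in $H_{\sbullet}(MSU;\Fp)$ is primitivity for the entirely different $c_1$-coaction of \hr[Lemma]{p_th}, and the whole difficulty is to satisfy both conditions \emph{simultaneously}. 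Your phrase ``the classes $z_n$ with $n\neq p^t$ are to be arranged inside $H_{\sbullet}(MSU;\Fp)$'' is the statement to be proved, not a step of a proof, and the Poincar\'e-series count only shows that the inclusion $\Fp[Y_2,Y_3,\ldots]\subseteq H_{\sbullet}(MSU;\Fp)$ is an equality \emph{after} that inclusion is established. The paper's proof is engineered around exactly this point: for $n\neq p^t,p^t-1$ it takes $z_n$ to be the mod~$p$ Hurewicz image of Stong's $SU$-manifold generators of $\pi_{\sbullet}(MU)\otimes\Fp$, which are automatically primitive \emph{and} automatically in the image of $H_{\sbullet}(MSU;\Fp)$, and are indecomposable by construction; for $n=p^t-1$ it takes the specific class $z_{p^t-1}=\Phi(c_{p^t-1}^*)$, which lies in $H_{\sbullet}(MSU;\Fp)$ because $c_{p^t-1}^*$ is primitive for the $c_1$-coaction, and whose $\Ap^*$-coaction is then computed \emph{exactly} (not merely up to a correctable error) via Brown--Davis--Peterson and the antipode formula, giving \hr[Corollary]{411}; and in degrees $p^t$ only the $p$-th power $z_{p^{t-1}}^p$ need lie in $H_{\sbullet}(MSU;\Fp)$, which is \hr[Lemma]{p_th}, with Adams' Lemma~2.1 supplying indecomposability in $H_{\sbullet}(MSU;\Fp)$. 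Note the tension your approach cannot resolve: once $z_{p^t-1}$ is required to lie in $H_{\sbullet}(MSU;\Fp)$, you are no longer free to correct it by arbitrary decomposables of $H_{\sbullet}(MU;\Fp)$, which is precisely why a concrete class with an exactly computable coaction is needed.

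A secondary point: your sign bookkeeping in (iii) is off. Dualising the $P^k$-action on $H^{\sbullet}(\C P^\infty;\Fp)$ and passing to the left coaction, the component of $\rho(b_{p^t-1})$ in $\Ap^*\otimes 1$ is $+\bar\xi_t\otimes 1$; the antipode produces the conjugate $\bar\xi_t=\chi(\xi_t)$ but no extra minus sign. The minus sign in the theorem comes from the paper's choice of generator: $c_n^*\equiv -b_n$ modulo decomposables, so $\rho\big(\Phi(c_{p^t-1}^*)\big)$ has component $-\bar\xi_t\otimes 1$. This is harmless (replace your class by its negative), but it indicates the coaction computation that your plan leans on was not actually carried out. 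With that caveat, your inductive-correction scheme for (i)--(iii) is a workable alternative to the paper's explicit choices; it just cannot be completed to a proof of (iv), which is the part of the statement the rest of the paper depends on.
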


\begin{proof}
	If $n \neq p^t, p^t - 1$, let $z_n$ be the Hurewicz image of 
	the $n$-dimensional $SU$-manifold $M_n$, where $\{M_n\}_{n\neq p^t, p^t-1}$
	are polynomial generators of $\pi_{\sbullet}(MU)\otimes_{\Z}\Fp$ in
	the given degrees. Such manifolds are described, for example,
	in~\cite[p.~240--242]{ston68}.
	Note that $z_n$ is primitive as it is the Hurewicz image.
	
	If $n = p^t$, take $z_n = x_{p^t} \in H_{2p^t}(MU; \Fp)$, where $x_{p^t}$ is from \hr[Theorem]{thom-homology}.
	Elements $z_n$ are primitive by the very definition.
	
	Finally, if $n = p^t-1$, let $z_{p^t-1} = \Phi(c_{p^t-1}^*)$, $t\ge 1$,
	where $c_{p^t-1}^* \in H_{\sbullet}(BU; \Fp)$ is the dual of $c_{p^t-1}$
	with respect to the monomial basis in the $c_i$.
	Clearly, $z_{p^t - 1} \in H_{\sbullet}(MSU; \Fp)$ for all $t \ge 1$.
	To compute the coaction of $z_{p^t-1}$ we use the following theorem.
	
	\begin{theorem}[Brown-Davis-Peterson~{\cite[Theorem~1.1]{b-d-p77}}]\label{chern-coaction}
		Let $\xi = 1 + \xi_1 + \xi_2 + \dots$ and
		$C^* = 1 + \Phi(c_1^*) + \Phi(c_2^*) + \dots$
		Then for the right action
		$H^{\sbullet}(MU; \Fp) \otimes_{\Fp}\Ap \to H^{\sbullet}(MU; \Fp)$,
		the dual right coaction $\Delta$ maps $C^*$ to
		\[
		\Delta(C^*) = -1 \otimes \bar\xi^{-1} +{}
		\sum_{i \ge 1} \Phi(c_i^*)\otimes\bar\xi^{i-1}.
		\]
		Here and further $\bar\psi \in \Ap^*$ is Hopf conjugate of $\psi \in \Ap^*$,
	\end{theorem}
		
	\begin{corollary}\label{411}
		$\Delta(z_{p^t-1}) = -1\otimes\xi_t + \sum\limits_{s = 0}^{t-1}
		z_{p^{t-s}-1}^{p^{s}} \otimes \xi_{s}$.
		Equivalently, $\rho(z_{p^t-1}) = - \bar\xi_t\otimes 1 + \sum\limits_{s 
			= 0}^{t-1}
		\bar\xi_{s} \otimes z_{p^{t-s}-1}^{p^{s}}$.
	\end{corollary}
	\begin{proof}
		For $\zeta\in \Ap^*$, denote by
		$\zeta_n$ the component of degree~$n$.
		
		Since $\Phi(c_{p^t-1}^*) = z_{p^t-1}$, the formula from \hr[Theorem]{chern-coaction}
		gives
		\begin{align}\label{cor37}
			\Delta(z_{p^t-1}) = -1\otimes (\bar\xi^{-1})_{2(p^t-1)} + \sum_{i \geqslant 1} \Phi(c_i^*)\otimes (\bar\xi^{i-1})_{2(p^t-i-1)}.
		\end{align}
		
		First, it follows easily from \hr[Theorem]{xi-antipode} that
		$(\bar\xi^{-1})_{2(p^t - 1)} = \xi_t$.

		By \hr[Theorem]{xi-antipode},
		\begin{align*}
			(\bar\xi^{i-1})_{2(p^t-i-1)}
			&=
			\sum_{n(R) = p^t - i - 1}
			\binom{c((p^t - i - 1) + (i-1) + 1)}{e(R)}\binom{e(R)}{r_1, r_2, \ldots}\xi^R
			\\
			&=
			\sum_{n(R) = p^t - i - 1}
			\binom{1}{e(R)}\binom{e(R)}{r_1, r_2, \ldots}\xi^R
			=
			\begin{cases}
				0\quad&\text{if $i \neq p^t- p^s$;}
				\\
				\xi_{s}&\text{if $i = p^t-p^s$.}
			\end{cases}
		\end{align*}
		The last identity holds because $\binom{1}{e(R)}=0$ unless $R = (0, \ldots, 1, 0, \ldots)$, in which case $n(R) = p^s-1$, $s \geqslant 1$.
		Finally, $\Phi(c_{pk}^*) = \Phi(c_k^*)^p$, hence $\Phi(c_{p^t - p^s}) = z_{p^s-1}^{p^{t-s}}$.
		Substituting these expressions into \eqref{cor37},
		we obtain
		\[
			\Delta(z_{p^t-1}) = -1\otimes\xi_t + \sum\limits_{s = 0}^{t-1}
			z_{p^{t-s}-1}^{p^{s}} \otimes \xi_{s}
		\]
		as needed.
	\end{proof}
	
	\medskip
	
	We resume the proof of \hr[Theorem]{preMSU-coalgebra}.
	Since $\rho$ is multiplicative, it follows from the corollary above that 
	$z_{p^t-1}$ is indecomposable in $H_{\sbullet}(MU; \Fp)$. The elements $z_i$, $i \neq p^t-1$, are indecomposable in $H_{\sbullet}(MU;\Fp)$ by their construction. This proves assertion~(i). 
	Assertions (ii)--(iii) are clear.
	If $n \neq p^t$, then $z_n$ actually lies in the image of $H_{\sbullet}(MSU; \Fp)$.
	Therefore, they are polynomial generators of $H_{\sbullet}(MSU; \Fp)$.
	By~\hr[Lemma]{p_th}, $z_{p^{t-1}}^p$, $t \geqslant 1$, lies in $H_{\sbullet}(MSU; \Fp)$. And finally, by~\cite[Lemma~2.1]{adams75}, $z_{p^{t-1}}^p \in H_{p^t}(MSU; \Fp)$, $t \geqslant 1$ are polynomial generators.
\end{proof}

\begin{corollary}\label{38}
    There are polynomial generators $Y_n \in H_{2n}(MSU)$, $\neq p^t$ such that $f_*(Y_n) = \pm b_n$ modulo decomposables.
\end{corollary}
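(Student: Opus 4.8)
The plan is to reduce the claim to the computation of a single integer in each degree and then to pin that integer down one prime at a time with the help of \hr[Theorem]{preMSU-coalgebra}. Since $H_{\sbullet}(MU;\Z)=\Z[b_1,b_2,\dots]$ and $H_{\sbullet}(MSU;\Z)=\Z[Y_2,Y_3,\dots]$ are polynomial, the indecomposables satisfy $QH_{2n}(MU;\Z)=\Z\langle b_n\rangle$ and $QH_{2n}(MSU;\Z)\cong\Z$ for $n\ge2$. As $f_*$ is an algebra monomorphism it induces $Q(f_*)\colon QH_{2n}(MSU;\Z)\to QH_{2n}(MU;\Z)$, which in these bases is multiplication by an integer $d_n$. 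Any polynomial generator $Y_n$ of $H_{2n}(MSU;\Z)$ then satisfies $f_*(Y_n)\equiv \pm d_n\,b_n$ modulo decomposables, so the statement is equivalent to $d_n=\pm1$ for every $n$ that is not a prime power.

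To detect $d_n$ I would localise. Because the two homology algebras are free polynomial, reduction modulo $p$ commutes with passage to indecomposables, and hence $p\nmid d_n$ precisely when the reduction $Q(f_*)\otimes\Fp$ is nonzero. For an odd prime $p$ with $n\ne p^t$ this nonvanishing is immediate from \hr[Theorem]{preMSU-coalgebra}: part~(iv) realises $H_{\sbullet}(MSU;\Fp)$ as the subalgebra $\Fp[Y_2,Y_3,\dots]\subset H_{\sbullet}(MU;\Fp)$ with $Y_n=z_n$, while part~(i) states that $z_n$ is a polynomial generator of $H_{\sbullet}(MU;\Fp)$. Thus $f_*(Y_n)=z_n$ is a nonzero indecomposable, and since $QH_{2n}(MU;\Fp)=\Fp\langle b_n\rangle$ is one--dimensional we get $z_n\equiv u\,b_n$ modulo decomposables with $u\in\Fp^\times$; in particular $p\nmid d_n$.

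Now suppose $n$ is not a prime power, so $n\ne p^t$ for every prime $p$. The previous step yields $p\nmid d_n$ for all odd $p$, and the only remaining prime is $p=2$, which I expect to be the main difficulty since \hr[Theorem]{preMSU-coalgebra} is stated for odd primes. Here I would run the construction from the proof of that theorem at $p=2$: the classes $z_n$ with $n\ne2^t$ are again Hurewicz images of $SU$-manifolds (for $n\ne 2^t-1$) or equal $\Phi(c_{2^t-1}^*)$ (for $n=2^t-1$), so they lie in $H_{\sbullet}(MSU;\mathbb F_2)$ and are polynomial generators of $H_{\sbullet}(MU;\mathbb F_2)$; a dimension count against the squares $z_{2^{t-1}}^2\in H_{\sbullet}(MSU;\mathbb F_2)$ (\hr[Lemma]{p_th}) shows these exhaust the generators (cf.~\cite{peng82}). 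This gives $2\nmid d_n$, whence $d_n=\pm1$, and a generator $Y_n$ of $QH_{2n}(MSU;\Z)$ lifts to a polynomial generator of $H_{2n}(MSU;\Z)$ with $f_*(Y_n)\equiv\pm b_n$ modulo decomposables.

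It is worth recording the uniform content of this computation, which also furnishes an alternative argument treating all primes at once. Under the Thom isomorphisms $Q(f_*)$ is identified with the map induced by $BSU\to BU$, and dually with the restriction $f^*\colon PH^{2n}(BU;\Z)\to PH^{2n}(BSU;\Z)$ on primitive cohomology. The generator of $PH^{2n}(BU;\Z)$ is the Newton power sum $p_n$ (its coefficient of $c_1^n$ is $\pm1$, so it has content $1$), and $f^*(p_n)=p_n|_{c_1=0}$; therefore $|d_n|$ equals the content of $p_n|_{c_1=0}$ in the monomial basis of $\Z[c_2,c_3,\dots]$. The arithmetic heart of the matter is then the single divisibility statement that this content is $p$ when $n=p^t$ and $1$ otherwise—exactly the phenomenon governing the values of the Milnor number $s_n$ on $SU$-manifolds (cf.~\hr[Theorem]{polynomial-generators}). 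Establishing this content statement is the only non-formal step, and it is the same obstruction that surfaces as the prime~$2$ case above; granting it, the corollary follows at once.
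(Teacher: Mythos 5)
Your first two paragraphs reproduce exactly the argument the paper intends: \hr[Corollary]{38} is stated as an immediate consequence of \hr[Theorem]{preMSU-coalgebra}(i),(iv) --- for every odd prime $p$ with $n\neq p^t$ the generator $Y_n=z_n$ of $H_{\sbullet}(MSU;\Fp)$ stays indecomposable in $H_{\sbullet}(MU;\Fp)$, so no odd prime divides your index $d_n$ --- and your reduction to the single integer $d_n$ is a correct, careful way of passing from these mod~$p$ statements to a statement about genuine polynomial generators. Note, however, that the paper only ever invokes the corollary in the proof of \hr[Theorem]{polynomial-generators}, with $\Z[\frac12]$ coefficients, where powers of $2$ are units; so the intended reading is ``$n$ not a power of an odd prime, coefficients $\Z[\frac12]$'', and for that statement your first two paragraphs already constitute a complete proof coinciding with the paper's.

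The genuine gap is in your prime~$2$ step, which you need only because you adopted the stronger, fully integral reading. Porting the construction of \hr[Theorem]{preMSU-coalgebra} to $p=2$ fails at its first ingredient: Stong's $SU$-manifolds generating $\pi_{\sbullet}(MU)\otimes_{\Z}\Fp$ in degrees $n\neq p^t, p^t-1$ exist only for odd~$p$, and at $p=2$ the required manifolds simply do not exist in the degrees you need. For instance, for $n=6$ such a manifold would be an $SU$-manifold $M^{12}$ with $s_6(M)$ odd; but modulo $2$ one has $s_6=(s_3)^2$, so $s_6(M)\equiv\langle Sq^6 s_3,[M]\rangle=\langle v_6\,s_3,[M]\rangle$, and for any closed $12$-manifold with $w_1=w_2=0$ and vanishing odd Stiefel--Whitney classes the Wu class is $v_6=w_6+Sq^2w_4=w_6+(w_2w_4+w_6)=0$. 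Hence every $12$-dimensional $SU$-manifold has even $s_6$, its Hurewicz image is decomposable in $H_{\sbullet}(MU;\mathbb F_2)$, and your proposed generators $z_6$ do not exist at $p=2$ (this is precisely why $H_{\sbullet}(MSU;\mathbb F_2)$ is genuinely harder: by Pengelley's theorem it is not an extended comodule, and its polynomial generators in such degrees are not Hurewicz images). The fact you actually need, namely $2\nmid d_n$ for $n$ not a power of $2$, is true, but both of your routes to it stop short: the manifold argument is false as stated, and your (correct) identification of $|d_n|$ with the content of $s_n|_{c_1=0}$ leaves the key arithmetic claim --- content $=p$ for $n=p^t$ and $=1$ otherwise --- unproven. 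Either prove that content statement directly (e.g.\ by induction via Newton's identities), or restrict the corollary to $\Z[\frac12]$ coefficients as the paper does, in which case your odd-primary argument suffices.
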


\medskip

We summarise the results above in the following description of the $\Ap^*$-coalgebra structure of~$H_{\sbullet}(MSU; \Fp)$, it is similar to \hr[Theorem]{thom-homology}.

\begin{theorem}\label{MSU-coalgebra}
	Let $p$ be an odd prime.
	There are elements $Y_n \in H_{2n}(MSU; \Fp)$ such that
	\[
		H_{\sbullet}(MSU; \Fp) \cong \Fp[Y_2, Y_3, \ldots].
	\]
	The left coaction $\rho\colon H_{\sbullet}(MSU; \Fp) \to \mathfrak A_p^* \otimes_{\Fp} H_{\sbullet}(MSU; \Fp)$ is given by
	\[
		Y_n \mapsto
		\begin{cases}
			- \bar\xi_t\otimes 1 + \sum\limits_{s 
				= 0}^{t-1}
			\bar\xi_{s} \otimes Y_{p^{t-s}-1}^{p^{s}}\quad&\text{if $n = p^t - 1$;}
			\\
			\hfil Y_n&\text{otherwise.}
		\end{cases} 
	\]
	In particular, the subalgebra $PH_{\sbullet}(MSU; \Fp)$ of primitive elements under this coaction is isomorphic to
	\[
		\Fp[Y_n, n\neq p^t - 1, n \geqslant 2],\quad\deg Y_n = 2n.
	\]
	Futhermore, the composite 
	\begin{align*}
		H_{\sbullet}(MSU; \Fp)
		\xrightarrow{\rho}&
		\Ap' \otimes_{\Fp} H_{\sbullet}(MSU; \Fp)
		\\
		\xrightarrow{\id\otimes \pi}&
		\Ap'\otimes_{\Fp} H_{\sbullet}(MSU; \Fp)/(Y_i, i = p^t-1)
		= \Ap'\otimes_{\Fp} PH_{\sbullet}(MSU; \Fp)
	\end{align*}%
	is an isomorphism of $\Fp$-algebras and $\Ap^*$-comodules. Moreover, this composite is given by
	\[
		Y_n \mapsto
		\begin{cases}
			-\bar\xi_t \otimes 1\quad&\text{if $n = p^t-1$;}
			\\
			1\otimes Y_n&\text{otherwise}.
		\end{cases}
	\]
\end{theorem}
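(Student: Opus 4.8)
The plan is to read off all the assertions from \hr[Theorem]{preMSU-coalgebra} and \hr[Corollary]{411}, which already carry the essential analytic content; the present statement only reorganises that information in terms of the $MSU$-generators and extracts the primitive subalgebra. Accordingly, I take the elements $Y_n$ to be exactly those produced in \hr[Theorem]{preMSU-coalgebra}(iv), so that $Y_n = z_{n/p}^p$ when $n = p^t$ and $Y_n = z_n$ otherwise. Part~(iv) of that theorem then gives the polynomial description $H_{\sbullet}(MSU;\Fp)\cong\Fp[Y_2,Y_3,\ldots]$ at no cost, and it remains to compute the coaction on generators, to exhibit the splitting, and to identify the primitives.

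For the coaction I would distinguish the three types of generator. When $n\neq p^t,p^t-1$ the element $Y_n=z_n$ is a Hurewicz image, hence primitive, so $\rho(Y_n)=1\otimes Y_n$. When $n=p^t$ the generator $Y_n=z_{p^{t-1}}^p$ is the $p$-th power of $z_{p^{t-1}}$, which is primitive by construction; as $\rho$ is an algebra map over $\Fp$ one gets $\rho(Y_n)=\rho(z_{p^{t-1}})^p=(1\otimes z_{p^{t-1}})^p=1\otimes Y_n$, again primitive. The only nontrivial case is $n=p^t-1$, which is precisely \hr[Corollary]{411}: there I would rewrite each factor $z_{p^{t-s}-1}^{p^{s}}$ as $Y_{p^{t-s}-1}^{p^{s}}$, which is legitimate because for odd~$p$ and $t-s\ge 1$ the integer $p^{t-s}-1$ is coprime to~$p$ and hence never a power of~$p$, so that $z_{p^{t-s}-1}=Y_{p^{t-s}-1}$. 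This produces exactly the displayed formula.

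The splitting isomorphism I would obtain by restricting the isomorphism $G$ of \hr[Theorem]{preMSU-coalgebra}(ii) to the sub-comodule-algebra $H_{\sbullet}(MSU;\Fp)\subseteq H_{\sbullet}(MU;\Fp)$; call the restriction $G'$. By \hr[Theorem]{preMSU-coalgebra}(iii) it acts on generators by $Y_{p^t-1}\mapsto -\bar\xi_t\otimes 1$ and $Y_n\mapsto 1\otimes Y_n$ for $n\neq p^t-1$. Since $\{\bar\xi_t\}_{t\ge 1}$ form a polynomial generating set of $\Ap'$, the classes $\{Y_n:n\neq p^t-1\}$ generate the quotient $H_{\sbullet}(MSU;\Fp)/(Y_i:i=p^t-1)$, and the degrees match ($\deg Y_{p^t-1}=\deg\bar\xi_t=2(p^t-1)$), the map $G'$ carries a set of polynomial generators of the source bijectively, degree by degree, onto a set of polynomial generators of $\Ap'\otimes \bigl(H_{\sbullet}(MSU;\Fp)/(Y_i:i=p^t-1)\bigr)$. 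Hence $G'$ is an algebra isomorphism, and being a restriction of the comodule map $G$ it is a comodule map as well. The primitive subalgebra then follows formally: in $\Ap'\otimes PH_{\sbullet}(MSU;\Fp)$ with the coaction on the first factor an element is primitive only if its $\Ap'$-components are, and the connected Hopf algebra $\Ap'$ has no primitive comodule elements in positive degree (the counit forces an $x\otimes 1$ summand in $\Delta x$); thus the primitives are $1\otimes PH_{\sbullet}(MSU;\Fp)$, which $G'$ identifies with $\Fp[Y_n:n\neq p^t-1,\,n\ge 2]$.

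I expect the only real friction to be bookkeeping rather than genuine difficulty, since the hard input is already packaged in \hr[Theorem]{preMSU-coalgebra} and \hr[Corollary]{411}. The step most prone to slips is the re-indexing in the case $n=p^t-1$: one must check that $z_{p^{t-s}-1}^{p^{s}}$ really equals $Y_{p^{t-s}-1}^{p^{s}}$ for every~$s$, i.e.\ that no $p^{t-s}-1$ secretly coincides with a power of~$p$, and that the resulting classes lie in $H_{\sbullet}(MSU;\Fp)$, so that the formula is internal to $MSU$-homology. Confirming that the target comodule structure in the splitting is exactly the one induced from $G$, and that restricting $G$ lands in $\Ap'\otimes PH_{\sbullet}(MSU;\Fp)$ rather than merely in $\Ap'\otimes PH_{\sbullet}(MU;\Fp)$, is the other point that needs a clean justification.
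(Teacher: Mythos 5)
Your proposal is correct and is essentially the paper's own (implicit) argument: the paper states Theorem~\ref{MSU-coalgebra} as a summary of Theorem~\ref{preMSU-coalgebra} and Corollary~\ref{411}, with no separate proof given. Your derivation --- taking the $Y_n$ from Theorem~\ref{preMSU-coalgebra}(iv), computing the coaction case by case (primitivity of Hurewicz images and of $p$-th powers of primitives, plus Corollary~\ref{411} with the re-indexing $z_{p^{t-s}-1}=Y_{p^{t-s}-1}$), and restricting the isomorphism $G$ to obtain the splitting and then reading off the primitives via the connectedness of $\Ap'$ --- is exactly the intended bookkeeping, carried out correctly.
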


\section{Novikov's theorem}\label{novikov_thm}
In this section we compute $\pi_{\sbullet}(MSU)\otimes \Z[\frac 12]$ using the modern adaptation of
the Novikov's original proof (see~\cite{novi62}).

\begin{theorem}[Novikov~\cite{novi62}]\label{pi-msu}
	There are elements $y_n \in \pi_{2n}(MSU)\otimes \Z[\frac 12]$,
	$n = 2, 3, \ldots$, such that
	\[
		\pi_{\sbullet}(MSU)\otimes \Z[\textstyle\frac{1}{2}]
		\cong
		\Z[\textstyle\frac{1}{2}][y_2, y_3, \ldots].
	\]
\end{theorem}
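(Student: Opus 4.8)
The plan is to run the Adams spectral sequence of Theorem~\ref{ASS} for the spectrum $X = MSU$ one prime at a time, over all odd primes $p$, and show that after inverting~$2$ the spectral sequence collapses at $E_2$ in a way that forces $\pi_{\sbullet}(MSU)_p$ to be the $p$-localisation of a polynomial ring on generators in each even degree $\ge 4$. First I would dualise the comodule description of Theorem~\ref{MSU-coalgebra}: the $E_2$-term is $\Ext_{\Ap}^{s,t}(H^{\sbullet}(MSU;\Fp),\Fp)$, and the decomposition
\[
	H_{\sbullet}(MSU;\Fp) \cong \Ap' \otimes_{\Fp} PH_{\sbullet}(MSU;\Fp)
\]
exhibited there (with $\Ap' = (\Ap/(\beta))^*$ the quotient dual Steenrod algebra) is the key structural input. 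Because the comodule is extended/cofree along the sub-Hopf-algebra corresponding to $\Ap'$, a change-of-rings (Shapiro-type) isomorphism lets me rewrite the $\Ext$ over $\Ap$ as $\Ext$ over the exterior quotient generated by the Bockstein~$\beta$, with coefficients in $PH_{\sbullet}(MSU;\Fp)$. Concretely I expect $E_2^{s,t}$ to be a free module, on one polynomial generator for each $Y_n$ with $n \ne p^t-1$, over the $\Ext$ of the exterior algebra on a single primitively-behaving class; the latter contributes only in filtration $s=0$ once~$2$ is inverted because the relevant higher Ext classes (coming from $\beta$ and the $\bar\xi_t\otimes 1$ terms) sit in the range that is killed by localisation.

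The next step is to verify the collapse. I would argue that the differentials $d_r\colon E_r^{s,t}\to E_r^{s+r+1,t+r}$ must vanish: the multiplicative structure of the spectral sequence, together with the fact that every polynomial generator of $PH_{\sbullet}(MSU;\Fp)$ is a permanent cycle detecting a genuine bordism class (the Hurewicz images of the $SU$-manifolds $M_n$ used to build the $z_n$ in Theorem~\ref{preMSU-coalgebra}), pins down enough permanent cycles to generate $E_2$ as an algebra, and there is no room for differentials on generators for degree reasons after inverting~$2$. Thus $E_\infty = E_2$, concentrated in filtration $s=0$, which gives an isomorphism
\[
	\pi_{\sbullet}(MSU)_p \cong \Fp[y_n \colon n\ge 2] \quad\text{(suitably lifted)}
\]
abutting to a polynomial ring on generators $y_n$ in degree $2n$, one for each $n\ge 2$, for every odd~$p$. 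Assembling these local computations, together with the integral (mod torsion) statement $H_{\sbullet}(MSU;\Z)\cong\Z[Y_2,Y_3,\ldots]$ recorded before the theorem, yields the claimed polynomiality of $\varOmega^{SU}\otimes\Z[\tfrac12] = \pi_{\sbullet}(MSU)\otimes\Z[\tfrac12]$, since inverting~$2$ removes exactly the $2$-torsion that obstructs polynomiality and the $p$-local pictures agree on the free part in each degree.

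The main obstacle, and the place where I would spend the most care, is controlling the extension problems and the behaviour of the classes $Y_{p^t-1}$, whose coaction in Theorem~\ref{MSU-coalgebra} carries the nontrivial term $-\bar\xi_t\otimes 1$ rather than being primitive. These are precisely the generators that are \emph{not} permanent cycles in the naive sense: their coaction feeds into the $s=1$ line of the $E_2$-term and is what would normally produce torsion. The heart of the argument is to show that after inverting~$2$ one may choose the polynomial generators $y_n$ so that the contributions of these $\bar\xi_t\otimes 1$ terms are absorbed — i.e.\ that the corresponding $\Ext$ groups over the exterior part contribute nothing to $\pi_{\sbullet}$ in positive filtration once $2$ is invertible, so no hidden extensions deform the polynomial algebra. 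I would handle this by an explicit filtration-by-filtration bookkeeping of the change-of-rings spectral sequence, checking that every class in positive homological degree is either a boundary or is annihilated by the localisation, and by matching generators across primes to produce a single global set $y_2, y_3, \ldots$ defined over $\Z[\tfrac12]$.
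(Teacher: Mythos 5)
Your overall framework (odd-primary Adams spectral sequence, the splitting $H_{\sbullet}(MSU;\Fp) \cong \Ap'\otimes_{\Fp} PH_{\sbullet}(MSU;\Fp)$ from Theorem~\ref{MSU-coalgebra}, change of rings) is the same as the paper's, but the heart of your argument contains a genuine error. You claim that after the change of rings one works over ``the exterior algebra on a single primitively-behaving class'' and that all positive-filtration classes are ``killed by localisation,'' so that $E_\infty$ is concentrated in filtration $s=0$. Both claims are false. The relevant quotient is $\Ap^*/\!/\Ap' \cong \Lambda_{\Fp}[\tau_0,\tau_1,\tau_2,\ldots]$, exterior on \emph{infinitely many} generators, so $\Cotor_{\Ap^*/\!/\Ap'}(\Fp,\Fp) \cong \Fp[q_0,q_1,q_2,\ldots]$ with every $q_t$ in filtration $s=1$; and inverting $2$ does nothing $p$-locally at an odd prime, so none of these classes disappear. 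Indeed, if $E_\infty$ were concentrated in $s=0$, then $\pi_{\sbullet}(MSU)_p$ would be an $\Fp$-vector space (multiplication by $p$ raises Adams filtration, being detected by $q_0$), contradicting the torsion-free polynomial answer you are trying to prove.

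Moreover, the positive-filtration classes are not an obstruction to be absorbed; they are where half of the polynomial generators live. Since $PH_{\sbullet}(MSU;\Fp) = \Fp[Y_n,\ n \neq p^t-1]$ has no generator in degrees $2(p^t-1)$, the polynomial generator of $\pi_{\sbullet}(MSU)_p$ in that degree must be detected by $q_t \in E_2^{1,2p^t-1}$ --- this is exactly why those generators have Milnor genus divisible by $p$ (Theorem~\ref{polynomial-generators}). The paper's actual argument is: $E_2 \cong \Fp[q_0]\otimes_{\Fp}\Fp[m_2,m_3,\ldots]$ with $m_{p^t-1} = q_t$ in filtration $1$ and $m_n = Y_n$ in filtration $0$ otherwise; the collapse follows not from any localisation but from sparsity ($E_2^{s,t}=0$ for $t-s$ odd, while $d_r$ changes $t-s$ by $1$); and the extension problems are resolved by identifying $q_0$ with $p \in \pi_0(MSU)$, giving $\pi_{\sbullet}(MSU)_p \cong \Z[m_2,m_3,\ldots]$ additively. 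Finally, your last step of ``matching generators across primes'' is also a gap rather than a detail: the homotopy lifts of the $m_i$ genuinely depend on $p$, and the paper circumvents this by proving $\pi_{\sbullet}(MSU)\otimes_{\Z}\Fp \cong \Fp[P_2,P_3,\ldots]$ for every odd prime $p$ (via a downward-induction filtration argument) and then invoking the algebraic fact that a graded ring of finite type which is polynomial modulo every odd prime becomes polynomial over $\Z[\frac12]$ after inverting~$2$.
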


\begin{proof}
	For an odd prime~$p$, consider the mod~$p$ Adams spectral sequence for
	$\pi_{\sbullet}(MSU)$ with the second term
	\[
		E_2^{s,t} \cong \Cotor_{\Ap^*}^s\big(\Fp, H_{\sbullet}(MSU; \Fp)\big)_t.
	\]
	
	By~\hr[Theorem]{MSU-coalgebra}, $H_{\sbullet}(MSU; \Fp) \cong \Ap'\otimes PH_{\sbullet}(MSU; \Fp)$,
	where $\Ap' = \Fp[\xi_1, \xi_2, \ldots]$ and $PH_{\sbullet}(MSU; \Fp) = \Fp[Y_n, n\neq p^t - 1, n \geqslant 2], \deg Y_n = 2n$,
	is the subcoalgebra of primitive elements.
	
	By The Change of Rings Theorem (see~\cite{liul64}),
	\begin{align*}
		\Cotor_{\Ap^*}^{\sbullet}\big(\Fp, H_{\sbullet}(MSU;\Fp)\big)_{\sbullet}
		&=
		\Cotor_{\Ap^*}^{\sbullet}
		\big(\Fp, \Ap'\otimes_{\Fp} PH_{\sbullet}(MSU; \Fp)\big)_{\sbullet}
		\\
		&=
		\Cotor_{\Ap^*}^{\sbullet}
		\big(\Fp, \Ap'\big)_{\sbullet}\otimes_{\Fp} PH_{\sbullet}(MSU; \Fp)
		\\
		&=
		\Cotor_{\Ap^*/\!/\Ap'}^{\sbullet}
		(\Fp, \Fp)_{\sbullet}\otimes_{\Fp} PH_{\sbullet}(MSU; \Fp).	
	\end{align*}
	
	By \hr[Theorem]{steenrod-dual}, $\Ap^*/\!/\Ap' \overset{\mathrm{def}}{=} \Ap^*/(\Ap^*\cdot\Ap') = \Lambda_{\Fp}[\tau_0, \tau_1, \ldots]$, $\deg \tau_t = 2p^t-1$,
	is an exterior algebra of finite type. Therefore, 	$\Cotor_{\Ap^*/\!/\Ap'}^{\sbullet}(\Fp, \Fp)_{\sbullet}$ is a polynomial algebra
	\[
		\Cotor_{\Ap^*/\!/\Ap'}^{\sbullet}(\Fp, \Fp)_{\sbullet} = \Fp[q_0, q_1, \ldots],\quad q_t \in \Cotor_{\Ap^*/\!/\Ap'}^1(\Fp, \Fp)_{2p^t-1}.
	\]
	Therefore, for odd prime~$p$ the second term of the Adams spectral sequence has the following form
	\begin{align*}
		E_2^{\sbullet, \sbullet}
		&=
		\Fp[q_0, q_1, \ldots]\otimes_{\Fp} PH_{\sbullet}(MSU;\Fp)
		\\
		&=
		\Fp[q_0, q_1, \ldots]\otimes_{\Fp} \Fp[Y_n, n\neq p^t - 1, n \geqslant 2]
		\\
		&=
		\Fp[q_0] \otimes_{\Fp} \Fp[m_2, m_3, \ldots],
	\end{align*}
	where $q_0 \in E_2^{1,1}$ and
	\[
		m_n =
		\begin{cases}
		q_t\in E_2^{1, 2n+1}\quad&\text{if $n = p^t-1$ for some $t>0$;}
		\\
		Y_n \in E_2^{0, 2n}&\text{otherwise}.
		\end{cases}
	\]
	Note that $E_2^{s,t}$ is zero whenever $t-s$ is odd.
	Therefore, since the differentials reduce $t-s$ by~$1$,
	$d_r = 0$ for every $r = 2, 3, \ldots$ and $E_{2}^{\sbullet, \sbullet} = E_{\infty}^{\sbullet, \sbullet}$.
	One can easily show that
	\[
		q_0 \in E_2^{1,1} = \Cotor_{\Ap^*/\!/\Ap'}^1(\Fp, \Fp)_1
	\]
	is represented by $p \in \pi_0(MSU) \cong \Z$.
	Thus, for every odd prime~$p$ there is an isomorphism of abelian groups
	\[
		\pi_{\sbullet}(MSU)_p \cong \Z[m_2, m_3, \ldots].
	\]
	
	Next we show that there is a ring isomorphism
	\[
		\pi_{\sbullet}(MSU)\otimes_{\Z} \Fp \cong \Fp[P_2, P_3, \ldots]
	\]
	for some $P_i = P_i(p)\in \pi_{2i}(MSU)$.
		
	Let $P_i\in \pi_{2i}(MSU)$, $i = 2, 3, \ldots$ be elements corresponding
	to $m_i \in E_\infty^{\sbullet, \sbullet}$ in the mod~$p$ Adams spectral 
	sequence for $MSU$. Note that our choice depends on~$p$.
	Since $m^{\alpha}$,
	$\alpha = \{\alpha_1, \ldots, \alpha_k, 0, \ldots\} \in \bigoplus_{i = 0}^\infty \Z$
	are linearly independent in the Adams spectral sequence, it follows that the monomials
	$P^\alpha = P_1^{\alpha_1}\cdots P_k^{\alpha_k}$ are linearly independent.
	We have to prove that the monomials $P^\alpha$ generate
	$\pi_{\sbullet}(MSU)\otimes_\Z \Fp$.
	
	\cite[Lemma~20.28]{swit75} states that for every~$q \in \Z_{\geqslant 0}$
	there is $t\in \Z_{\geqslant 0}$
	such that a filtration term $F^{t,q+t}$ is trivial $\mod p$,
	i.e. $F^{t, q+t} \otimes_\Z \Fp = 0$.
	Now using downward induction over $s$ starting at $s = t$, we show that
	monomials $P^\alpha$ of degree~$q$ and of filtration at least~$s$ span 
	$F^{s,q+s}\otimes_\Z \Fp$.
	Let $\{m^\alpha, \alpha\in\bigoplus_{i = 0}^\infty \Z\}$
	be monomials in $m_i$ in $E_{\infty}^{s-1, q+s-1}$, then any $x \in F^{s-1, q+s-1}\otimes_\Z\Fp$
	can be written as $x = \sum a_i P^{\alpha_i} \mathrm{mod}\,F^{s,q+s}$, 
	because the projection $F^{s-1,q+s-1} \to E_{\infty}^{s-1, q+s-1}$
	sends $P^{\alpha_i}$ to $m^{\alpha_i}$, $i = 1, \ldots, k$. 
	Now we can use the inductive assumption to prove that
	the monomials $P^\alpha$ generate $\pi_{\sbullet}(MSU)\otimes_\Z \Fp$.
	
	Finally, \hr[Theorem]{pi-msu} follows from the fact that if a graded ring of finite type~$R$ is isomorphic to $\Fp[P_2, P_3, \ldots]$, $\deg P_i = 2i$ modulo an odd prime~$p$ then its localisation $R[\frac 12]$ is isomorphic to $\Z[\frac 12][y_2, y_3, \ldots]$, $\deg y_i = 2i$.
\end{proof}

\section{Multiplicative generators}\label{generators}
In this section we compute characteristic numbers of
polynomial generators of $\pi_{\sbullet}(MSU)\otimes \Z[\frac 12]$.
We start from the description of the mod~$p$ Hurewicz
homomorphism in terms of the mod~$p$ Adams spectral sequence.

\begin{proposition}\label{hurewicz-ASS}
	Let $X$ be a spectrum of finite type. Then the mod~$p$ Hurewicz homomorphism
	\[
	h\colon \pi_{\sbullet}(X) \to H_{\sbullet}(X; \Fp)
	\]
	coincides with the composite map
	\[
	\pi_{\sbullet}(X) \to E_\infty^{0,{\sbullet}} \hookrightarrow 
	E_2^{0,{\sbullet}} = PH_{\sbullet}(X; \Fp) 
	\hookrightarrow H_{\sbullet}(X; \Fp),
	\]
	where
	$E_\infty^{0,{\sbullet}}$ and $E_2^{0,{\sbullet}}$
	are terms of the mod~$p$ Adams spectral sequence.
\end{proposition}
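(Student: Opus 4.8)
The plan is to run both maps through the \emph{canonical} Adams resolution and match them stage by stage. Recall that this resolution is built from the cofibre sequence $\overline{H\Fp}\to S^0\xrightarrow{\eta}H\Fp$ by setting $X_s=\overline{H\Fp}^{\wedge s}\wedge X$ and $K_s=H\Fp\wedge X_s$, with structure maps $g_s=\eta\wedge\id_{X_s}\colon X_s\to K_s$ fitting into cofibre sequences $X_{s+1}\xrightarrow{i}X_s\xrightarrow{g_s}K_s\xrightarrow{\partial_s}\Sigma X_{s+1}$. The attached exact couple has $E_1^{s,\sbullet}=\pi_{\sbullet}(K_s)$, with structure maps $g=(g_s)_*$ and $k=(\partial_s)_*$. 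In particular $E_1^{0,\sbullet}=\pi_{\sbullet}(H\Fp\wedge X)=H_{\sbullet}(X;\Fp)$, and since $g_0=\eta\wedge\id_X$ the map $(g_0)_*\colon\pi_{\sbullet}(X)\to\pi_{\sbullet}(K_0)=H_{\sbullet}(X;\Fp)$ is \emph{by definition} the mod~$p$ Hurewicz homomorphism $h$.

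The whole argument then rests on a single nullhomotopy. First I would note that the $d_1$-differential on $E_1^{0,\sbullet}$ is the reduced coaction $\psi-(1\otimes\id)$, so that $E_2^{0,\sbullet}=\ker d_1$ is the module of primitives $PH_{\sbullet}(X;\Fp)$, in agreement with $\Cotor^0_{\Ap^*}(\Fp,H_{\sbullet}(X;\Fp))=PH_{\sbullet}(X;\Fp)$. Now for $\alpha\in\pi_{\sbullet}(X)$ the element $k\big((g_0)_*\alpha\big)=(\partial_0\circ g_0)_*\alpha$ vanishes, because $\partial_0\circ g_0$ is a composite of two consecutive maps in a cofibre sequence and hence null. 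In an exact couple the vanishing of $k(e)$ forces $e$ to be a permanent cycle on which all $d_r$ vanish; thus $(g_0)_*\alpha$ defines a class in $E_\infty^{0,\sbullet}\subseteq E_2^{0,\sbullet}$, the inclusion holding because every differential \emph{into} $E_r^{0,\sbullet}$ would issue from a term of negative filtration. This already shows that $h=(g_0)_*$ factors as $\pi_{\sbullet}(X)\to E_\infty^{0,\sbullet}\hookrightarrow E_2^{0,\sbullet}=PH_{\sbullet}(X;\Fp)\hookrightarrow H_{\sbullet}(X;\Fp)$.

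It remains to identify the induced map $\pi_{\sbullet}(X)\to E_\infty^{0,\sbullet}$ with the edge homomorphism, that is, with the filtration quotient $F^0\pi_{\sbullet}\to F^0/F^1=E_\infty^{0,\sbullet}$, where $F^s\pi_{\sbullet}=\mathrm{im}\big(\pi_{\sbullet}(X_s)\to\pi_{\sbullet}(X)\big)$. The convergence theorem for the tower $\{X_s\}$ (applicable since $X$ is of finite type) supplies an isomorphism $E_\infty^{0,\sbullet}\cong F^0/F^1$ which is precisely the one induced by $g=(g_0)_*$, so the two maps agree once I check that $(g_0)_*$ kills $F^1$. But $F^1\pi_{\sbullet}$ is the image of $\pi_{\sbullet}(X_1)\xrightarrow{i}\pi_{\sbullet}(X)$, and $g_0\circ i\colon X_1\to X_0\to K_0$ is again a composite of two consecutive maps in the cofibre sequence $X_1\xrightarrow{i}X_0\xrightarrow{g_0}K_0$, hence null. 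Therefore $(g_0)_*$ descends to $F^0/F^1$ and realises the edge homomorphism, giving $h=$ the stated composite.

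I expect the only genuine obstacle to be the last step: pinning down that the abstract edge homomorphism, defined via the image filtration on $\pi_{\sbullet}(X)$, coincides with the concrete $g_0$-induced map landing in permanent cycles. The cleanest way to discharge this is to invoke the standard description of the $s=0$ edge homomorphism for the spectral sequence of a tower rather than re-deriving the comparison by hand; all the content specific to this proposition is contained in the two elementary nullhomotopies $\partial_0\circ g_0\simeq *$ and $g_0\circ i\simeq *$ used above.
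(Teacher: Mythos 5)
Your proposal is correct, but it takes a genuinely different route from the paper's proof. Both arguments start from the same observation, namely that $h$ is induced by the spectrum map $\eta\wedge\id_X\colon X\to H\Fp\wedge X$; from there you stay entirely inside the exact couple of the canonical Adams resolution ($X_s=\overline{H\Fp}^{\wedge s}\wedge X$, $K_s=H\Fp\wedge X_s$), so that everything reduces to the two nullhomotopies $\partial_0\circ g_0\simeq\ast$ and $g_0\circ i\simeq\ast$, the identification of $d_1$ on $E_1^{0,\sbullet}$ with the reduced coaction, and the standard description of the $s=0$ edge homomorphism for the spectral sequence of a tower. The paper instead regards $h'=\eta\wedge\id_X$ as a map of spectra and invokes naturality of the Adams spectral sequence: the spectral sequence of the target $H\Fp\wedge X$ collapses to filtration zero, because $H_{\sbullet}(H\Fp\wedge X;\Fp)\cong\Ap^*\otimes_{\Fp}H_{\sbullet}(X;\Fp)$ is an extended comodule, so that $\bar E_2^{s,\sbullet}=0$ for $s>0$ and $\bar E_2^{0,\sbullet}\cong\Fp\square_{\Ap^*}H_{\sbullet}(H\Fp\wedge X;\Fp)\cong H_{\sbullet}(X;\Fp)$, and the claim then follows from a commutative ladder comparing the two abutments together with the fact that $(h'_*)_2$ is the inclusion $PH_{\sbullet}(X;\Fp)\hookrightarrow H_{\sbullet}(X;\Fp)$. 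Your version is more elementary and self-contained (no second spectral sequence, no cofree-comodule vanishing argument), whereas the paper's version is shorter modulo the naturality machinery and rehearses exactly the technique it reuses in the proof of \hr[Theorem]{polynomial-generators}, where the collapsing spectral sequence of $H\Z\wedge MSU$ plays the role of $\bar E$. Note also that the one step you flag as requiring a citation --- that the abutment map $\pi_{\sbullet}(X)\to E_\infty^{0,\sbullet}$ is the map induced by $(g_0)_*$ --- is precisely the content hidden in the commutativity of the top square of the paper's diagram, so both proofs lean on the same standard convergence fact at the same spot; and both equally gloss the caveat that the spectral sequence abuts $\pi_{\sbullet}(X)_p$ rather than $\pi_{\sbullet}(X)$, which is harmless here since the composite in the statement factors through it.
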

\begin{proof}
	Note that the map
	\[
	h'\colon X \simeq \Sigma^\infty S^0 \wedge X
	\xrightarrow{\eta_{H\Fp}\wedge \id_X} H\Fp\wedge X
	\]
	induces the mod~$p$ Hurewicz homomorphism in homotopy groups.
	Thus, we have a map of mod~$p$ Adams spectral sequences
	$(h'_*)_r \colon E_r^{\sbullet, \sbullet}
	\to
	\bar{E}_r^{\sbullet, \sbullet}$, $r \geqslant 2$,
	for $\pi_{\sbullet}(X)$
	and $\pi_{\sbullet}(H\Fp\wedge X) \cong H_{\sbullet}(X; \Fp)$
	induced by~$h'$. 
	Since
	$H_{\sbullet}(H\Fp\wedge X; \Fp)
	\cong \Ap^*\otimes_{\Fp} H_{\sbullet}(X; \Fp)$,
	it follows that 
	\[
	\bar{E}_2^{s, \sbullet} =
	\Cotor_{\Ap^*}^s\big(\Fp, H_{\sbullet}(H\Fp\wedge X; 
	\Fp)\big)_{\sbullet} = 0,\quad\text{if $s>0$}
	\]
	and
	\[
	\bar{E}_2^{0, \sbullet}
	=
	\Cotor_{\Ap^*}^s\big(\Fp, H_{\sbullet}(H\Fp\wedge X; 
	\Fp)\big)_{\sbullet}
	\cong
	\Fp\square_{\Ap^*} H_{\sbullet}(H\Fp\wedge X; \Fp)
	=
	H_{\sbullet}(X; \Fp).
	\]
	The proposition now follows from the following decomposition
	of the Hurewicz homomorphism
	\begin{equation*}
	\begin{tikzcd}
	\pi_{\sbullet}(X)
	\arrow{r}{h}
	\arrow{d}
	&
	H_{\sbullet}(X; \Fp)
	\arrow{d}{\cong}
	\\
	E_\infty^{0, \sbullet}
	\arrow{r}{(h'_*)_\infty}
	\arrow[hook]{d}
	&
	\bar{E}_\infty^{0, \sbullet}
	\arrow[equal]{d}
	\\
	E_2^{0, \sbullet}
	\arrow{r}{(h'_*)_2}
	&
	\bar{E}_2^{0, \sbullet}
	\end{tikzcd}
	\end{equation*}
	and the fact that $(h'_*)_2$ equals to the following composition
	$E_2^{0, \sbullet} \cong PH_{\sbullet}(X; \Fp)
	\hookrightarrow H_{\sbullet}(X; \Fp)$.
\end{proof}

\medskip

For any positive integer~$n$ define
\[
		\lambda_n =
		\begin{cases}
			p\quad&\text{if $n+1 = p^t$ for some positive~$t$ and prime~$p$;}
			\\
			1&\text{otherwise.}
		\end{cases}
\]

\begin{theorem}[Novikov~\cite{novi62}]\label{polynomial-generators}
Let $h\colon \pi_{\sbullet}(MSU)\otimes[\frac 12] \to H_{\sbullet}(MSU;\Z[\frac12])$ be the Hurewicz homomorphism and let $f\colon MSU\to MU$ be the canonical map. Then there exist polynomial generators $y_n\in \pi_{2n}(MSU)\otimes \Z[\frac 12]$, $n\ge2$, such that
\[
  f_*h(y_n) = \pm \lambda_n\lambda_{n-1} b_n
\] 
modulo decomposable elements in $H_{2n}(MU;\Z[\frac12])$, where $b_n\in H_{2n}(MU)$ is a canonical polynomial generator.
	
Therefore, $y_n\in \pi_{2n}(MSU)\otimes \Z[\frac 12]$ can be taken as a polynomial generator if and only if
	\[
		s_n(y_n) = \pm \lambda_n \lambda_{n-1}
	\]
	up to a power of 2.
\end{theorem}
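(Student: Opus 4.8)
The plan is to reduce the statement to the coefficient of $b_n$ in $f_{*}h(y)$ and then argue one odd prime at a time with the mod~$p$ Adams spectral sequence. First I would invoke the standard fact that the Newton class $s_n$ is primitive, so the Milnor number $s_n$ annihilates products of positive-dimensional manifolds and factors through the indecomposable quotient $QH_{2n}(MU;\Z[\tfrac12])\cong\Z[\tfrac12]\langle b_n\rangle$; explicitly $s_n(y)=\pm(\text{coefficient of }b_n\text{ in }f_{*}h(y))$, since passing an $SU$-manifold to $MU$ leaves its Chern numbers unchanged. Granting this, the theorem's two assertions are linked formally: once some generator is shown to satisfy $s_n=\pm\lambda_n\lambda_{n-1}$, a degree-$2n$ element $y$ is a polynomial generator exactly when its indecomposable part is a unit, i.e.\ when $s_n(y)$ differs from $\pm\lambda_n\lambda_{n-1}$ by a unit of $\Z[\tfrac12]$, that is by a power of~$2$. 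So the whole theorem reduces to producing, at each odd prime, a generator with $v_p\bigl(s_n(y_n)\bigr)=v_p(\lambda_n\lambda_{n-1})$.

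For divisibility I would apply \hr[Proposition]{hurewicz-ASS}: the mod~$p$ Hurewicz image of any $y\in\pi_{2n}(MSU)$ is its filtration-$0$ class, a primitive in $PH_{\sbullet}(MSU;\Fp)$ pushed into $H_{\sbullet}(MU;\Fp)$ by $f_{*}$, so its $b_n$-coefficient is what we must read off. Two exceptional degrees force this to vanish. If $n=p^{t}-1$, then by \hr[Theorem]{MSU-coalgebra} there is no primitive generator in degree $2n$ (it is exactly the excluded index), so every primitive is a product and hence decomposable in $H_{\sbullet}(MU)$. If $n=p^{t}$, the primitive generator is $Y_{p^{t}}=z_{p^{t-1}}^{\,p}$, a $p$-th power, again decomposable in $H_{\sbullet}(MU)$ by \hr[Theorem]{preMSU-coalgebra}. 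In both cases the $b_n$-coefficient is $0$ mod~$p$, so $v_p(s_n)\ge1=v_p(\lambda_n\lambda_{n-1})$, while in every other degree $\lambda_n\lambda_{n-1}=1$ at~$p$. Thus $\lambda_n\lambda_{n-1}\mid s_n(y)$ at all odd primes.

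For the upper bound I would compare Adams spectral sequences along $f$, using that the Change-of-Rings factor $\Fp[q_0,q_1,\dots]$ is common to $MSU$ and $MU$, so on $E_2$ the map $f_{*}$ is $\mathrm{id}\otimes\bigl(PH(MSU)\to PH(MU)\bigr)$. For $n$ not a prime power a generator $z_n=h(M_n)$ is spherical with $f_{*}z_n=\pm b_n$ modulo decomposables by \hr[Corollary]{38}, so $s_n=\pm1$. For $n=p^{t}-1$, $f_{*}$ sends the filtration-$1$ generator $q_t$ of $MSU$ to $q_t$ of $MU$, which detects the classical generator $a_{p^{t}-1}$ with $s_{p^{t}-1}(a_{p^{t}-1})=\pm p$; as $a_{p^{t}-1}$ is itself of filtration~$1$, any filtration-$\ge2$ correction alters its indecomposable part only by a multiple of~$p$, so the indecomposable part of $f_{*}(y_{p^{t}-1})$ is a unit multiple of $a_{p^{t}-1}$ and $v_p(s_n)=1$. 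Since consecutive integers have opposite parity, at most one of $\lambda_n,\lambda_{n-1}$ is an odd prime, so for each $n$ a single odd prime governs $s_n$ up to units; once the corresponding local valuation is known, assembling a global $y_n$ with $f_{*}h(y_n)=\pm\lambda_n\lambda_{n-1}b_n$ modulo decomposables is immediate. This leaves exactly one local computation, treated next.

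The hard part will be the exactness when $n=p^{t}$, the genuinely $SU$-specific case. Now $y_{p^{t}}$ has filtration~$0$, but its detecting class $z_{p^{t-1}}^{\,p}$ maps under $f_{*}$ to the \emph{decomposable} element $x_{p^{t-1}}^{\,p}$ of $H_{\sbullet}(MU)$, which represents $a_{p^{t-1}}^{\,p}$; hence the filtration-$0$ comparison contributes nothing to $s_{p^{t}}$ (indeed $s_{p^{t}}(a_{p^{t-1}}^{\,p})=0$ by primitivity), and the entire $b_{p^{t}}$-coefficient is carried by the filtration-$\ge1$ correction. Because $a_{p^{t}}$ is here of filtration~$0$, the $E_2$-level comparison yields only $v_p(s_{p^{t}})\ge1$; to force equality one must show that the filtration-$1$ part of $f_{*}(y_{p^{t}})$ contains $q_0\,x_{p^{t}}$ (equivalently $p\,a_{p^{t}}$) with a unit coefficient. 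I expect to settle this by a $\bmod\,p^{2}$ computation of the primitive Hurewicz image: writing $h(y_{p^{t}})=\widehat{z}_{p^{t-1}}^{\,p}+p\,\eta$ with $\eta$ an integral $\Ap^{*}$-comodule primitive, the claim reduces to the $b_{p^{t}}$-coefficient of $\eta$ being a unit, which I would extract from the Brown--Davis--Peterson coaction formula used in \hr[Corollary]{411}; should a purely homological argument prove elusive, the same conclusion follows from exhibiting an explicit $SU$-manifold of dimension $2p^{t}$ whose Milnor number $s_{p^{t}}$ equals $\pm p$.
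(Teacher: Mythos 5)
Your reduction to the $b_n$-coefficient, your lower bounds (that $\lambda_n\lambda_{n-1}$ divides $s_n(y_n)$ at odd primes), and your treatment of $n\neq p^t,\,p^t-1$ are sound and agree with the paper. Your argument for $n=p^t-1$ is correct but takes a genuinely different route: the paper computes the image of the filtration-one class $q_t$ under the map $MSU\to H\Z\wedge MSU$ by an explicit cobar-complex calculation (obtaining $(h')_*(q_t)=q_0\bar\xi_t$, hence divisibility by exactly $p$), whereas you compare the Adams spectral sequences of $MSU$ and $MU$ along $f$ and invoke the classical Milnor--Novikov fact that $q_t$ detects a generator $a_{p^t-1}$ of $\pi_{2(p^t-1)}(MU)$ with $s_{p^t-1}(a_{p^t-1})=\pm p$. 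That comparison does work: $f_*$ carries $q_t$ to $q_t$ because the decompositions of \hr[Theorem]{thom-homology} and \hr[Theorem]{MSU-coalgebra} are compatible with $f_*$ (indeed $f_*(Y_{p^s-1})=z_{p^s-1}$), and any filtration-$\geqslant 2$ correction in that degree has $s$-number divisible by $p^2$. What you buy is independence from the cobar computation; what you pay is reliance on the classical computation of $\pi_{\sbullet}(MU)$ together with the $s$-numbers of its generators, which the paper's self-contained argument avoids.

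The genuine gap is the case $n=p^t$, which you correctly identify as the hard, $SU$-specific point but never actually prove. Everything you need reduces to the \emph{integral} statement that $f_*(Y_{p^t})=\pm p\,b_{p^t}$ modulo decomposables for an integral polynomial generator $Y_{p^t}\in H_{2p^t}(MSU;\Z)$ --- equivalently, that the indecomposable part of $f_*(Y_{p^t})$ is divisible by $p$ exactly once, not by $p^2$. The mod-$p$ information you propose to use (the structure results of \hr[Theorem]{preMSU-coalgebra} and \hr[Theorem]{MSU-coalgebra}, and the Brown--Davis--Peterson coaction formula behind \hr[Corollary]{411}) only reproduces the lower bound you already have: those are statements in $H_{\sbullet}(-;\Fp)$ and see nothing beyond the first power of $p$, so the promised ``mod $p^{2}$ computation of the primitive Hurewicz image'' is not a proof and cannot be extracted from them as stated. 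The paper closes exactly this point by citing Kochman's integral computation of polynomial generators of $H_{\sbullet}(BSU)$ (\cite[Theorem~3.3]{koch82}); you need either that result or a genuine substitute (for instance an explicit $SU$-manifold of dimension $2p^t$ with Milnor number $\pm p$, which you mention only as a fallback hope, not as a construction). As written, your proposal leaves the theorem unproved in one of the two exceptional families that make the statement non-trivial.
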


\begin{proof}
	\emph{Case 1:} $n\neq p^t-1$ for an odd prime~$p$.
	By \hr[Proposition]{hurewicz-ASS},
	$y_n \in \pi_{\sbullet}(MSU)\otimes_{\Z}\Z[\frac12]$ is a polynomial
	generator if and only if
	$h(y_n) \in PH_{\sbullet}(MSU; \Fp)\cong \Fp[Y_n\,|\; n \neq p^t - 
	1,\text{and}~n\geqslant 2]$ is a polynomial generator,
	in particular it is a polynomial generator of
	$H_{\sbullet}(MSU; \Fp)$. Since this is true for every odd prime~$p$,
	$h(y_n)$ is a polynomial generator of $H_{\sbullet}(MSU; \Z[\frac12])$.
	Thus, $h(y_n) = \pm Y_n$ modulo decomposables.
	
	Suppose that $n\ne p^t$. 
	Then $f_*(Y_n)=\pm b_n$ modulo decomposables by \hr[Corollary]{38}. Hence, $f_*h(y_n) = \pm b_n$ modulo decomposables, as needed. 
	
	Now suppose $n = p^t$.
    Then $f_*(Y_n)=\pm pb_n$ modulo decomposables by~\cite[Theorem~3.3]{koch82}. Therefore, $f_*h(y_n) = \pm p b_n=\pm\lambda_n\lambda_{n-1}b_n$ modulo decomposables, as claimed.
	
	\smallskip
	
	\emph{Case 2:} $n = p^t-1$ for an odd prime~$p$.
	Let $y_n \in \pi_{2n}(MSU)\otimes \Z[\frac 12]$ be a polynomial generator.
	Then $h(y_n)$ is decomposable in
	$PH_{\sbullet}(MSU; \Fp) \cong \Fp[Y_k\,|\; k \neq p^t - 
	1,\text{and}~k\geqslant 2]$
	since there are no indecomposables in degree $2(p^t - 1)$.
	Therefore, $f_*h(y_n) \in H_{2n}(MU; \Z)$ is
	divisible by~$p$ modulo decomposables.
	Argument similar to the one in case $n \neq p^t-1$ implies
	that $h(y_n)$ is a polynomial generator of
	$PH_{\sbullet}(MSU; \mathbb{F}_q)$ for any prime~$q\neq p$.
	Hence, $f_*h(y_n)$ is divisible by~$p$
	and is not divisible by any other prime~$q$ modulo decomposables.
	Therefore, $f_*h(y_n) = \pm p^k b_n$ modulo decomposables.
	
	It remains to prove that $k=1$ above. In other words, we need to prove that $h(y_n)$
	is not divisible by~$p^2$  modulo decomposables.
	This can be done either algebraically as below, or by providing
	explicit examples of $SU$-manifolds with an appropriate Milnor genus (see~\cite[Part~\RN{2}]{c-l-p19}).
	
	Consider the map $h'\colon MSU 
	\xrightarrow{\eta_{H\Z}\wedge\id_{MSU}} H\Z\wedge MSU$
	that induces the Hurewicz homomorphism in homotopy groups.	The second term of the mod~$p$ Adams spectral sequence for $H\Z\wedge MSU$
	is
	\begin{equation*}
    	\bar E_2^{s,t} =
    	\Cotor_{\Ap^*}^{s}(\Fp; H_{\sbullet}(H\Z\wedge MSU; \Fp))_t
    	=
    	\Cotor_{\Ap^*}^s(\Fp; H_{\sbullet}(H\Z; \Fp))_{\sbullet} 
    	\otimes_{\Fp} H_{\sbullet}(MSU; \Fp).
	\end{equation*}

	We have $H_{\sbullet}(H\Z; \Fp) \cong \Fp[\xi_1, \xi_2, \ldots]\otimes_{\Fp}\Lambda_{\Fp}[\bar\tau_1, \bar\tau_2, \ldots]$. Also, $\Ap^* \cong
	\Fp[\xi_1, \xi_2, \ldots]\otimes_{\Fp}\Lambda_{\Fp}[\bar\tau_0, 
	\bar\tau_1,\bar\tau_2, \ldots]$ by \hr[Theorem]{steenrod-dual}. It follows that $\Cotor_{\Ap^*}^{\sbullet}(\Fp; H_{\sbullet}(H\Z; \Fp))_{\sbullet} \cong \Fp[q_0]$, where $q_0 \in \Cotor_{\Ap^*}^1(\Fp; H_{\sbullet}(H\Z; \Fp))_1$ represents $p \in \pi_0(H\Z)$.
		
	Therefore, $\bar E_2^{s,t} = 0$ when $t-s$ is odd.
	As before, this implies that the spectral sequence degenerates in
	the second term, i.e. $\bar E_2^{s,t} = \bar E_\infty^{s,t}$.
	
	The map $h'$ induces the following map of $\Cotor$:
\begin{equation}\label{hcotor}
	    h'_*\colon\Cotor_{\Ap^*}^{s}(\Fp; H_{\sbullet}(MSU; \Fp))_t
	    \to \Cotor_{\Ap^*}^{s}(\Fp; H_{\sbullet}(H\Z\wedge MSU; \Fp))_t \cong \Fp[q_0] \otimes_{\Fp} H_{\sbullet}(MSU; \Fp).
\end{equation}	    
    In order to describe this map, we consider the following diagram of cobar complexes:
	\begin{equation}\label{cobar-triangle}
    	\begin{tikzcd}
    		C_{\Ap^*}^{\sbullet}\big(H_{\sbullet}(MSU; \Fp)\big)
    		\arrow{r}{\sim_{\text{quasi}}}
    		\arrow{rd}{\widetilde{h}}
    		&
    		C_{\Ap^*/\!/\Ap'}^{\sbullet}\big(PH_{\sbullet}(MSU; \Fp)\big)
    		\\
    		&
            C_{\Lambda_{\Fp}[\tau_0]}^{\sbullet} \big(H_{\sbullet}(MSU; \Fp)\big)
        \end{tikzcd}
	\end{equation}	
    In homology, the map $\widetilde h$ induces the map $h'_*$ in~\eqref{hcotor}.	

Consider the class $m_n = q_t \in \Cotor_{\Ap^*}^1
\big(\Fp, H_{\sbullet}(MSU; \Fp)\big)_{2p^t-1}$
defined in the proof of \hr[Theorem]{pi-msu}.	
By Cobar construction, it is represented by
$[\bar\tau_t] \in C_{\Ap^*/\!/\Ap'}^{\sbullet}\big(PH_{\sbullet}(MSU; \Fp)\big)$.
	The element
	\[
	Q_t
	=
	- \sum_{i = 0}^{t} [\bar\tau_i]\bar\xi_{t-i}^{p^i}\in
	C_{\Ap^*}^{\sbullet}\big(H_{\sbullet}(MSU; \Fp)\big)
	\]
	is a cycle that maps to $-[\bar\tau_t]$ under the 
	horizontal arrow in \hyperref[cobar-triangle]{diagram~\eqref{cobar-triangle}}.
	Indeed, recall that the conjugation is a coalgebra antihomomorphism
	and the coproduct on $\Ap$ is given in \hr[Theorem]{steenrod-dual}.
	Therefore, we have 
	\begin{align*}
	dQ_t
	=
	-\sum_{i = 0}^{t}[1|\bar\tau_i]\bar\xi_{t-i}^{p^i}
	+\sum_{i = 0}^{t}[1|\bar\tau_i]\bar\xi_{t-i}^{p^i}
	&+\sum_{i = 0}^{t}\sum_{k = 0}^{i}%
	[\bar\tau_k|\bar\xi_{i-k}^{p^k}]\bar\xi_{t-i}^{p^i}
	-\sum_{i = 0}^{t}\sum_{k = 0}^{t-i}%
	[\bar\tau_i|\bar\xi_k^{p^i}]\bar\xi_{t-i-k}^{p^{i+k}}
	\\
	&=
	\sum\limits_{\substack{a+b = c \\ 0 \leqslant a,b,c \leqslant t}}
	[\bar\tau_a|\bar\xi_b^{p^a}]\xi_{t-c}^{p^c}
	-\hspace*{-1.5mm}
	\sum\limits_{\substack{a+b = c \\ 0 \leqslant a,b,c \leqslant t}}
	[\bar\tau_a|\bar\xi_b^{p^a}]\xi_{t-c}^{p^c}
	=
	0.
	\end{align*}
	The horizontal arrow in \hyperref[cobar-triangle]{diagram~\eqref{cobar-triangle}} maps all~$\xi_i$'s
	except $\xi_0 = 1$ to zero.	Therefore, $Q_t$ represents
	$q_t \in \Cotor_{\Ap^*}^1\big(\Fp, H_{\sbullet}(MU; \Fp)\big)_{2p^t-1}$.
	
	Now we have $\widetilde{h}(Q_t) = [\tau_0]\bar\xi_t \in C^1_{\Lambda[\tau_0]}\big(H_{\sbullet}(MSU; \Fp)\big)$.
	Hence, $(h')_*(q_t) = q_0\bar\xi_t \in \bar E_2^{1, 2p^t-1}$.
	Since $q_0$ represents the multiplication by $p$, the element $h(y_n)\in H_{2n}(MSU;\Z[\frac12])$ is divisible precisely by~$p$ modulo decomposables.
\end{proof}

\begin{corollary}\label{comm_sq}
	There are polynomial generators
	$y_i \in \pi_{2i}(MSU)\otimes_\Z \Z[\frac 12]$,
	$x_i \in \pi_{2i}(MU)\otimes_\Z \Z[\frac 12]$,
	$Y_i \in H_{2i}(MSU; \Z[\frac 12])$
	and
	$X_i \in H_{2i}(MU; \Z[\frac 12])$, such that the canonical maps
	are given as in the diagram below:
	\begin{equation*}
		\begin{tikzcd}[column sep=8em, row sep=4em]
			\pi_{\sbullet}(MSU)\otimes_\Z \Z[\textstyle{\frac 12}]
			\arrow{r}{y_n \mapsto \lambda_{n-1}x_n}
			\arrow[d, "y_n \mapsto \lambda_{n}Y_n" labl]
			&
			\pi_{\sbullet}(MU)\otimes_\Z \Z[\textstyle{\frac 12}]
			\arrow[d, "x_n \mapsto \lambda_n X_n" labl]
			\\
			H_{\sbullet}(MSU; \Z[\frac 12])
			\arrow{r}{Y_n \mapsto \lambda_{n-1} X_n}
			&
			H_{\sbullet}(MU; \Z[\frac 12])
		\end{tikzcd}
	\end{equation*}
\end{corollary}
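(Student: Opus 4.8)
The plan is to read off three of the four arrows from results already in hand and then to recover the remaining (top horizontal) arrow by naturality of the Hurewicz homomorphism. Throughout I work modulo decomposables and allow rescaling of generators by units of $\Z[\frac12]$; in particular every power of $2$ is a unit, so the coefficients $\lambda_n$, $\lambda_{n-1}$ are only constrained when they are odd primes. Recall that the indecomposable quotients $QH_{2n}(MSU;\Z[\frac12])$ and $QH_{2n}(MU;\Z[\frac12])$ are free of rank one, generated by $Y_n$ and $X_n:=b_n$ respectively, so ``coefficient of $X_n$ modulo decomposables'' is a well-defined element of $\Z[\frac12]$.

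First I would record the two $MSU$-side arrows. The bottom arrow is supplied by \hr[Corollary]{38} together with \cite[Theorem~3.3]{koch82}: one has $f_*(Y_n)=\pm b_n$ modulo decomposables when $n\neq p^t$ and $f_*(Y_n)=\pm p\,b_n$ when $n=p^t$, which with $X_n=b_n$ reads $f_*(Y_n)=\lambda_{n-1}X_n$ modulo decomposables; in particular $f_*^{H}$ is diagonal on the $Y_n$. For the left vertical arrow, the proof of \hr[Theorem]{polynomial-generators} yields generators $y_n$ with $h(y_n)=\lambda_n Y_n$ modulo decomposables: Case~1 gives $h(y_n)=\pm Y_n$ when $n\neq p^t-1$ for every odd prime $p$ (there $\lambda_n\in\{1,2\}$ is a unit), while Case~2 shows that for $n=p^t-1$ the composite $f_*h(y_n)$ is divisible by exactly $p$; since $p$ odd forces $n=p^t-1$ to be even, so that $\lambda_{n-1}\in\{1,2\}$ is a unit, diagonality of $f_*^{H}$ then pins the $Y_n$-coefficient of $h(y_n)$ to $\lambda_n=p$ up to a unit. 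For the right vertical arrow I invoke the classical computation of Milnor numbers for complex bordism (see, e.g.,~\cite{swit75}): a polynomial generator of $\pi_{2n}(MU)$ has $s_n$-number $\pm p$ if $n+1=p^t$ and $\pm1$ otherwise, so there are polynomial generators $x_n\in\pi_{2n}(MU)\otimes\Z[\frac12]$ with $h(x_n)=\lambda_n X_n$ modulo decomposables. Thus $h_{MU}$ too is diagonal, and any class with indecomposable part $\sum_m c_m x_m$ has Hurewicz image with $X_n$-coefficient $c_n\lambda_n$ modulo decomposables.

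With the three arrows fixed, the top arrow is forced by commutativity of the square, which is nothing but naturality of the Hurewicz homomorphism for $f\colon MSU\to MU$, i.e. $h_{MU}\circ f_*^{\pi}=f_*^{H}\circ h_{MSU}$. Evaluating the right-hand side on $y_n$ and using the left and bottom arrows gives $f_*^{H}(\lambda_n Y_n)=\lambda_n\lambda_{n-1}X_n$ modulo decomposables. If the indecomposable part of $f_*^{\pi}(y_n)$ is $c_n x_n$, then by the last sentence of the previous paragraph the left-hand side has $X_n$-coefficient $c_n\lambda_n$. Comparing and cancelling $\lambda_n\neq0$ in the integral domain $\Z[\frac12]$ yields $c_n=\lambda_{n-1}$, hence $f_*^{\pi}(y_n)=\lambda_{n-1}x_n$ modulo decomposables. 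This factors the known composite $f_*h(y_n)=\pm\lambda_n\lambda_{n-1}b_n$ of \hr[Theorem]{polynomial-generators} through the two intermediate terms and establishes all four labels of the diagram modulo decomposables.

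It remains to upgrade the four identities from ``modulo decomposables'' to the exact arrows in the displayed square. Here I would run a triangular change of generators by induction on degree, replacing each of $y_n,x_n,Y_n,X_n$ by itself plus a decomposable correction so that all four multiplicative maps become simultaneously diagonal on generators; the corrections in degree $2n$ involve only generators of strictly smaller degree, so the induction closes. The main obstacle I anticipate is precisely this last bookkeeping: arranging the corrections to be compatible around the whole square at once, rather than normalising one arrow at a time, while keeping the cancellation of the odd prime $\lambda_n$ honest---$\lambda_n$ is not invertible in $\Z[\frac12]$, so every division by it must be carried out on the rank-one coefficient modules, which is exactly where it is legitimate, and never on the homotopy or homology classes themselves.
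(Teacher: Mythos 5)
Your first three paragraphs are correct and are essentially the paper's own (implicit) derivation: \hr[Corollary]{comm_sq} is stated without a separate proof precisely because it is assembled from \hr[Corollary]{38} together with Kochman's theorem (bottom arrow), the two cases of \hr[Theorem]{polynomial-generators} (left arrow), Milnor's classical computation of $s_n$ on $\pi_{\sbullet}(MU)$ (right arrow), and naturality of the Hurewicz homomorphism to force the top arrow. Cancelling $\lambda_n$ on the rank-one indecomposable quotients is legitimate and is the intended argument.

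The problem is your final paragraph: the passage from ``modulo decomposables'' to exact identities on generators is not bookkeeping --- it is impossible, so no triangular change of generators can close the induction. Take $n=p^t$ for an odd prime $p$, so $\lambda_{n-1}=p$. By \hr[Theorem]{preMSU-coalgebra}(iv), $f_*\colon H_{\sbullet}(MSU;\Fp)\to H_{\sbullet}(MU;\Fp)$ is injective (it identifies $H_{\sbullet}(MSU;\Fp)$ with the subalgebra $\Fp[Y_2,Y_3,\ldots]$ of $\Fp[z_1,z_2,\ldots]$), and the mod~$p$ reduction of any polynomial generator $Y_n\in H_{2n}(MSU;\Z[\frac12])$ is nonzero; hence $f_*(Y_n)\not\equiv 0 \bmod p$, so $f_*(Y_n)$ is not divisible by $p$ in the free module $H_{\sbullet}(MU;\Z[\frac12])$, and no choice of generators can realise $f_*(Y_n)=p X_n$ exactly. (What Kochman gives is divisibility of the leading coefficient only: mod~$p$ one has $f_*(Y_{p^t}) = u\, z_{p^{t-1}}^p + (\text{products})$ with $u\neq 0$, a decomposable but nonzero class.) The same obstruction kills the exact top arrow: $f_*(y_{p^t})=p\,x_{p^t}$ in homotopy would give $f_*h(y_{p^t})=h\big(f_*(y_{p^t})\big)\equiv 0 \bmod p$, whereas $h(y_{p^t})\bmod p$ is a polynomial generator of $PH_{\sbullet}(MSU;\Fp)$ by Case~1 of \hr[Theorem]{polynomial-generators} and $f_*$ is injective mod~$p$. (The two vertical Hurewicz arrows \emph{can} be normalised exactly, using that generators in degrees $2(p^t-1)$ may be chosen of positive Adams filtration; it is the two horizontal arrows that cannot.) So the corollary can only be read as a statement about the induced maps on indecomposable quotients --- which is exactly what your first three paragraphs prove; your closing remark that $\lambda_n$ must never be divided into the classes themselves is, in fact, the reason the exact normalisation you were aiming for does not exist, and the last paragraph should simply be replaced by that observation.
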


\end{document}